\newtheorem{thm}{Theorem}[section]
\newtheorem{prop}[thm]{Proposition}
\newtheorem{lemma}[thm]{Lemma}
\theoremstyle{definition}
\newtheorem{defin}[thm]{Definition}
\newtheorem{guideline}{Guideline}
\newcommand\power{\mathop{\mathcal P}}
\newcommand\scott[1]{\left\llbracket #1 \right\rrbracket}
\newcommand\dom{\mathop\mathrm{dom}}
\newcommand\FV{\mathop\mathrm{fv}}
\newcommand\BV{\mathop\mathrm{bv}}
\newcommand\Br{\mathop\mathrm{Br}}
\newcommand\fv\FV
\newcommand\rel{\mathop\text{rel}}
\newcommand\ESO{\mathrm{ESO}}
\newcommand\nmodels\nvDash
\renewcommand\models\vDash
\newcommand\rest{\mathbin\upharpoonright}
\newcommand\FO{\mathrm{FO}}
\newcommand\dep{\mathrm{d}}
\newcommand\mt{\mathrm{mt}}
\newcommand\M{\mathbb{M}}
\newcommand\depat{D}
\newcommand\indep{\mathbin \bot}
\newcommand\mmodels{\models_\mt}
\newcommand\truth\top
\newcommand{\iland}{\mathrel{\vphantom{\land}\mathpalette\iland@\relax}}
\newcommand{\ilor}{\mathrel{\vphantom{\lor}\mathpalette\ilor@\relax}}
\newcommand{\iland@}[2]{\ooalign{\raisebox{.15ex}{\rotatebox[origin=c]{-90}{$\m@th#1\leqslant$}}}}
\newcommand{\ilor@}[2]{\ooalign{\raisebox{.15ex}{\rotatebox[origin=c]{-90}{$\m@th#1\geqslant$}}}}
\title[Generalized quantifiers using team semantics]{Generalized quantifiers using team semantics} 
\author{Fredrik Engstr\"om}
\email{fredrik.engstrom@gu.se}
\address{Department of Philosophy, Linguistics and Theory of Science, University of Gothenburg, Box 100, 405 30 Gothenburg, Sweden}
\keywords{Team semantics, Dependence logic, Generalized quantifiers, Henkin quantifiers}
\begin{document}
\begin{abstract}
Dependence logic provides an elegant approach for introducing dependencies between
variables into the object language of first-order logic. In \cite{engstrom2012generalized}
generalized quantifiers were introduced in this context. However, a satisfactory account was only achieved for monotone increasing generalized quantifiers.

In this paper, we modify the fundamental semantical guideline of dependence logic to create a framework that adequately handles both monotone and non-monotone generalized quantifiers. We demonstrate that this new logic can interpret dependence logic and possesses the same expressive power as existential second-order logic (ESO) on the level of formulas. Additionally, we establish truth conditions for generalized quantifiers and prove that the extended logic remains conservative over first-order logic with generalized quantifiers and is able to express the branching of continuous generalized quantifiers.
\end{abstract}

\maketitle

%\listoftodos

\section{Introduction}

Dependence logic \cite{Vaananen:2007}  extends first-order logic by dependence
atoms of the form
\begin{equation*}
\depat(t_1,\ldots,t_n).
\end{equation*} 
These atoms express that the value of the term $t_n$ is functionally
determined by the values of $t_1,\ldots, t_{n-1}$. 

While in first-order logic, the order of quantifiers alone determines the
dependence relations between variables, dependence logic allows for more
general and non-linear dependencies between variables. Remarkably, dependence
logic is equivalent in expressive power to existential second-order logic
($\ESO$). Historically, it was preceded by the partially ordered quantifiers
(known as Henkin quantifiers) introduced by Henkin \cite{Henkin:1961}, as
well as the Independence-Friendly (IF) logic developed by Hintikka and
Sandu \cite{Hintikka:1989}.

The semantical framework of dependence logic, known as team semantics, has
proven to be highly flexible. It accommodates not only dependence atoms, but also other intriguing generalizations. For instance, variants of dependence atoms were introduced and explored in works such as \cite{engstrom2012generalized}, \cite{gradel2012dependence} and \cite{galliani2012inclusion}.

In \cite{engstrom2012generalized}, I investigated extensions of dependence logic using generalized quantifiers. I introduced a general schema for extending dependence logic with these quantifiers. Subsequently, this schema was further examined in subsequent works, including \cite{engstrom2013characterizing} and
\cite{engstrom2013dependence}. This extension allows us to express branching
 behavior of generalized quantifiers--a construct that naturally arises in
 natural language \cite{barwise1979branching}. Moreover, it provides a means
 to express different scope readings of generalized quantifiers in a natural
 and intuitive manner, as exemplified in \cite{aloni2022non}.

In the present paper, we take a slightly different approach and redefine the semantics of dependence logic to incorporate non-monotone generalized quantifiers into the logic. The traditional semantics of dependence logic relies on the following guiding principle. 

In the present paper we take a slightly different approach and redefine the
semantics of dependence logic to be able to incorporate non-monotone
generalized quantifiers into the logic. The traditional semantics of
dependence logic is based on the following guiding principle, or guideline.
\begin{guideline}
\label{g1}
A formula $\varphi$ is satisfied by a team $X$ if for
every assignment $s : \dom(X) \to M^k$, if $s \in X$ then $s$ satisfies
$\varphi$.
\end{guideline} This principle establishes the semantics for all formulas
 where it makes sense to say that a single assignment satisfies the formula,
 i.e., for all formulas that do not contain the dependence atom. From this
 principle, the semantical clauses of dependence logic naturally follow.

In this paper we will instead base our semantics on the following alternative
guideline.
\begin{guideline}
\label{g2}
A formula $\varphi$ is satisfied by a team $X$ if for every assignment $s :
\dom(X) \to M^k$, $s \in X$ iff $s$ satisfies $\varphi$,
\end{guideline}
This guideline replaces the implication in Guideline \ref
{g1} with an equivalence. In the language of first-order logic, nothing
exciting happens; it is only when we introduce new atoms, like dependence
atoms, or new logical operations that interesting things start to happen.

Dependence logic exhibits several appealing basic properties, including
closure under taking subteams and locality (where assignments of variables
not occurring in a formula are irrelevant for determining satisfaction).
However, this new logic is not closed under subteams and does not strictly
adhere to the usual notion of locality. Instead, it possesses a weaker
locality-like property, as demonstrated by Proposition \ref{lem1}.

In the present section, we introduce the semantics of dependence logic in a
slightly non-standard manner. We prove that this alternative semantics is
equivalent to the standard one. In the subsequent section, we modify the
truth conditions somewhat to create a new logic and prove that this new logic
matches the expressive power of existential second-order logic, and
consequently,  with that of dependence logic. Finally, in the last section,
we extend the logic by introducing generalized quantifiers and demonstrate
that the resulting system is conservative over first-order logic and prove
that it can express branching of continuous generlized quantifiers.

\subsection{Dependence logic}

In this section, we provide a brief introduction to dependence logic, where
satisfaction is defined in a non-standard but equivalent manner. For a more
detailed account of dependence logic, please refer to
\cite{Vaananen:2007}.

The syntax of dependence logic extends the syntax of first-order logic by introducing new atomic formulas known as dependence atoms. There is one dependence atom for each arity. We denote the dependence atom expressing that the term $t_{n}$ is uniquely
determined by the values of the terms $t_1,\ldots,t_{n-1}$ by
$\depat(t_1,\ldots,t_n)$.\footnote{The dependence atom is often also denoted by
$\mathord=(t_1,\ldots,t_n)$.} We assume that all formulas of dependence logic
are written in negation normal form, meaning that all negations in formulas appear in front of atomic formulas. Given a vocabulary $\tau$, $\dep[\tau]$ denotes the set
of $\tau$-formulas of dependence logic, i.e., $\dep[\tau]$ is the set described
by 
$$ \varphi ::= \mathrm{At} \mathrel| \lnot \mathrm{At} \mathrel|
\depat(t_1,\ldots,t_n) \mathrel| \varphi \iland \varphi \mathrel| \varphi \ilor
\varphi \mathrel | \exists x \, \varphi \mathrel| \forall x \, \varphi,$$
where $\mathrm{At}$ is an atomic formula in the vocabulary $\tau$.

We denote conjunction and disjunction with $\iland$ and $\ilor$ respectively because we intend to introduce the operators $\land$ and $\lor$ with
a different meaning later in the paper.

The set of free variables of a formula is defined as in first-order logic by
treating the dependence atoms as any other atom. The set of free variables of
a formula $\varphi$ is denoted by $\FV(\varphi)$. We say that a formula is
first-order if it contains no dependence atom.

To define a compositional semantics for dependence logic we employ \emph
{sets of assignments} known as \emph{teams}, rather than single assignments
as in first-order logic. An assignment is a function $s: V \to M$, where $V$
is a finite set of variables and $M$ is the universe under consideration.
Given a universe $M$, a team $X$ over $M$ is a pair of a finite set of
variables, denoted $\dom(X)$, and a subset of the function space $\dom
(X) \to M$. We will use an abuse of notation and refer to the set of
functions as $X$. Thus, we think of a team as a set of assignments along with
a set $\dom(X)$ that specifies the domain of these assignments. Clearly, the
set $\dom(X)$ is determined by $X$ in all cases except when $X$ is empty.

If $V=\emptyset$ there is only one assignment: the empty assignment, denoted
by $\epsilon$. It is important to observe that the team consisting of the empty assignment $\set{\epsilon}$ is different from each empty team; remember, there is one empty team for each finite set of variables. We denote an empty team by $\emptyset_V$, where $V$ is the domain of the team.

Given an assignment $s: V \to M$ and $a_1,\ldots,a_k \in M$ let
$$s[a_1,\ldots,a_k/x_1,\ldots,x_k]: V \cup \set{x_1, \ldots,x_k} \to M$$ 
be the assignment 
$$
s[a_1,\ldots,a_k/x_1,\ldots,x_k]: y \mapsto  \begin{cases}
s(y)  &\text{ if $y \in V \setminus \set{x_1,\ldots,x_k}$, and}\\
a_i  &\text{ if $y=x_i$.}
\end{cases} $$ 

We use the notation $\bar a$ as a shorthand for a finite sequence $a_1,\ldots,
a_k$, but we also treat $\bar a$ as the finite set of the $a_i$'s when
appropriate. The interpretation of the term $t$ in the model $\M$ under the
assignment $s$ is denoted by $t^{\M,s}$. Furthermore, the tuple obtained by
point-wise application of $s$ to the finite sequence $x_1, \ldots, x_k$ is
denoted by $s(\bar{x})$.

For assignments $s: V \to M$ and first-order formulas $\varphi$ such that
$\fv(\varphi) \subseteq V$ we denote the ordinary Tarskian satisfaction by
$\M,s \models \varphi$. If $\FV(\varphi) \subseteq \set{\bar x}$ then
the semantic value or denotation of $\varphi$ is given by
$$\scott{\varphi}^{\M}_{\bar x} = \set{s: \set{\bar x} \to M | \M,s \models
\varphi},$$ a team with domain $\set{\bar x}$.

Just as union and intersection correspond to disjunction and conjunction, there
are operations on denotations that correspond to quantification: 
$$\exists x
X = \set{s : \dom(X) \setminus \set{x} \to M | \exists a \in M : s
[a/x] \in X \text{ or } s \in X}$$ and $$\forall x X = \set{s : \dom(X)
\setminus \set{x} \to M | \forall a \in M : s[a/x] \in X \text{ or } s \in
X}.$$ 
Please note that, for these operations to be properly defined, we need to
add that $$\dom(\exists x X) = \dom(\forall x X) = \dom(X)
\setminus \set{x},$$ which is important in the case when
$\exists x X = \emptyset_V$. Also, observe that $\exists x X = \forall x X = X$
if $x \notin
\dom(X)$.

\begin{lemma} Let $\FV(\varphi) \subseteq \bar y$ and $\varphi$ a first-order
 formula, then \begin{itemize} 	 \item $\exists x \scott{\varphi}^{\M}_
 {\bar y} = \scott{\exists x \varphi}_{\bar y \setminus \set{x}}^
 {\M}$ and \item $\forall x \scott{\varphi}^{\M}_{\bar y} = \scott
 {\forall x \varphi}_{\bar y \setminus \set{x}}^{\M}$. \end{itemize}
\end{lemma}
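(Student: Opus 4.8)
The plan is to verify each identity by a two-step argument: first check that both sides are teams over the same domain, then compare them assignment by assignment. For the existential case, $\exists x \scott{\varphi}^{\M}_{\bar y}$ has domain $\set{\bar y} \setminus \set{x}$ directly from the definition of the operation $\exists x$, while $\scott{\exists x \varphi}^{\M}_{\bar y \setminus \set x}$ has domain $\set{\bar y} \setminus \set x$ from the definition of the denotation, noting that $\FV(\exists x \varphi) = \FV(\varphi) \setminus \set{x} \subseteq \bar y \setminus \set x$. The universal case is the same. So it suffices to show, for every $s : \set{\bar y} \setminus \set{x} \to M$, that $s \in \exists x \scott{\varphi}^{\M}_{\bar y}$ iff $\M, s \models \exists x \varphi$, and analogously for $\forall$.

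I would then split on whether $x \in \set{\bar y}$. Suppose first that $x \in \set{\bar y}$. Then any $s$ with domain $\set{\bar y} \setminus \set{x}$ satisfies $s \notin \scott{\varphi}^{\M}_{\bar y}$ simply because the two domains differ, so the disjunct ``or $s \in X$'' appearing in the definitions of $\exists x$ and $\forall x$ is vacuously false and may be discarded. What remains is exactly the Tarskian clause: since $s[a/x]$ has domain $\set{\bar y}$, the membership $s[a/x] \in \scott{\varphi}^{\M}_{\bar y}$ unfolds to $\M, s[a/x] \models \varphi$, whence $\exists a\colon s[a/x] \in \scott{\varphi}^{\M}_{\bar y}$ is equivalent to $\M, s \models \exists x \varphi$, and the corresponding $\forall a$ statement gives $\M, s \models \forall x \varphi$.

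Now suppose $x \notin \set{\bar y}$. Here I would invoke the remark that $\exists x X = \forall x X = X$ whenever $x \notin \dom(X)$, which applies because $\dom(\scott{\varphi}^{\M}_{\bar y}) = \set{\bar y}$; hence both left-hand sides collapse to $\scott{\varphi}^{\M}_{\bar y}$. On the right, $\bar y \setminus \set{x} = \bar y$, and since $\FV(\varphi) \subseteq \bar y$ forces $x \notin \FV(\varphi)$, the binding of $x$ is vacuous, so $\M, s \models \exists x \varphi$ iff $\M, s \models \varphi$ iff $\M, s \models \forall x \varphi$. Thus $\scott{\exists x \varphi}^{\M}_{\bar y} = \scott{\forall x \varphi}^{\M}_{\bar y} = \scott{\varphi}^{\M}_{\bar y}$, and both identities hold.

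The only delicate point is the bookkeeping around the extra disjunct in the quantifier operations. Its sole purpose is to make these operations act trivially on variables outside the domain, and the argument turns on recognizing that this disjunct is precisely what vanishes (by a domain mismatch) in the substantive case $x \in \bar y$ and precisely what survives in the trivial case $x \notin \bar y$. Once the case split is organized this way, each direction reduces to a direct unfolding of the definitions.
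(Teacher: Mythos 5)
Your proof is correct and is essentially the detailed version of what the paper does: the paper's proof says ``easy to check directly'' and singles out exactly the same degenerate case $x \notin \bar y$, where $\exists x$ and $\forall x$ act as the identity and vacuous binding makes the denotations coincide. Your case split and the observation that the extra disjunct $s \in X$ dies by domain mismatch when $x \in \bar y$ is the right bookkeeping for the substantive case.
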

\begin{proof} Easy to check directly, but note that if $x \notin \bar y$, then
 $\scott{\exists x \varphi}_{\bar y}^{\M} =  \scott{\varphi}_{\bar y}^
 {\M} = \exists x \scott{\varphi}_{\bar y}^{\M}  $.
\end{proof}

Let us now turn to the satisfaction relation for dependence logic. The
following definition, while not the conventional one found in the literature,
is equivalent to it, see Proposition \ref{ordsat}.

\begin{defin}\label{satdef}
The satisfaction relation for dependence logic $\M ,X \models  \varphi$, for $\FV(\varphi) \subseteq \dom(X)$ and $\varphi \in D[\tau]$ is defined as follows. 
\begin{enumerate}
\item $ \M ,X 
	\models  \psi \text{ iff } \forall s \mathord\in X: \M,s \models \psi$,  for first-order atomic or
	negated atomic formulas $\psi$.
\item $\M ,X \models  \depat(t_1,\ldots,t_{n+1}) \text{ iff }
\forall s,s' \mathord\in X \bigwedge_{1\leq i \leq n}t_i^{\M,s}
=t_i^{\M,s'}  \rightarrow  t_{n+1}^{\M,s} =t_{n+1}^{\M,s'}$.
%\item $\M ,X\models \lnot \depat(t_1,\ldots,t_{n+1}) \text{ iff } X =
%\emptyset $
\item $\M ,X \models  \varphi \iland \psi \text{ iff }\exists Y,Z \text{ s.t.  
	} X= Y \cap Z, \text{ and both } \M ,Y \models \varphi \text{ and } \M 
	,Z \models  \psi$.
\item $\M ,X \models  \varphi \ilor \psi \text{ iff } \exists Y,Z  \text{ s.t.  
	} X= Y \cup Z, \text{ and both } \M,Y \models  \varphi \text{ and } \M,Z 
	\models \psi$.
\item\label{ex} $\M ,X \models  \exists x \varphi \text{ iff } \exists Y
\text{ s.t. } x \in \dom(Y),
	\exists x Y = \exists x X \text{ and } \M,Y \models \varphi$.
\item\label{all} $\M ,X \models  \forall x \varphi \text{ iff } \exists Y
\text{ s.t. } x \in \dom(Y),
	\forall x Y = \exists x X \text{ and } \M,Y \models \varphi$.
\end{enumerate}
\end{defin}

Note that $X = Y \cup Z$ and $X=Y \cap Z$ implies that $\dom(X) = \dom(Y) = \dom(Z)$. 

Also, note that in \eqref{ex} and \eqref{all} we use $\exists x X$ instead of
just $X$, which might seem unconventional. However, this choice is deliberate
to handle cases where $x \in \dom(X)$: if $x \notin \dom(X)$ then $\exists x
X = X$. The decision to use $\exists x$ as the arity-reducing operation
applied to $X$ will become clearer in the argument below, following the proof
of Proposition \ref{ordsat}.

The operations $\exists x$ and $\forall x$ are arity-reducing operations.
However, the standard way of dealing with quantifiers in dependence logic
involves arity-increasing operations. Let $X$ be a team, then
\begin{itemize}
	\item $X[M/x]$ is the team $\set{s[a/x] | s \in X, a \in M},$
	\item $X[f/x]$ is $\set{s[f(s)/x] | s \in X}$, where $f: X \to M$ and
	\item $X[F/x]$ is $\set{s[a/x] | s \in X, a \in F(s)}$, where $F: X \to \power(M)$.
\end{itemize}

Define $X \rest\bar x = \set{s \rest \bar x | s \in X}$ and $\dom
(X\rest \bar x) = \dom(X) \cap
\set{\bar x}$; $X(\bar x) = \set{s(\bar x)|s \in X}$, and $\rel(X) = X(\bar
x)$, where $\bar x$ is $\dom(X)$ ordered with increasing indices.

\begin{prop}\label{ordsat}
The satisfaction relation $\M,X \models \varphi$ satisfies:
\begin{enumerate}
	\item If $\M, X \models \varphi$ and $X' \subseteq X$ then $\M,X' \models \varphi$.
  \item $\M ,X \models  \varphi \iland \psi \text{ iff } \M,X \models \varphi$ and $\M,X \models
	\psi$.
  \item $\M ,X \models  \exists x \varphi \text{ iff } \exists f: X \to M \text{ s.t. } \M,X[f/x] \models \varphi$.
  \item $\M ,X \models  \forall x \varphi \text{ iff } \M,X[M/x] \models \varphi$.
\end{enumerate}
\end{prop}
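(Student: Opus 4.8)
The plan is to establish the downward-closure property (1) first, by induction on the structure of $\varphi$, and then to deduce (2)--(4) from it together with some bookkeeping relating the arity-reducing operations $\exists x$ and $\forall x$ on teams to the arity-increasing operations $X[f/x]$ and $X[M/x]$. Throughout I use that a subteam $X' \subseteq X$ shares the domain of $X$, and that the universe $M$ is nonempty.

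For (1), the atomic, negated-atomic, and dependence-atom cases are immediate, since in each case satisfaction is a statement quantified universally over the assignments (or pairs of assignments) of the team, and passing to a subteam only discards constraints. For $\varphi \iland \psi$, given witnesses $X = Y \cap Z$ I would keep $Y$ and replace $Z$ by $Z \cap X'$, so that $Y \cap (Z \cap X') = X'$ and the induction hypothesis applies to $Z \cap X' \subseteq Z$; for $\varphi \ilor \psi$, given $X = Y \cup Z$ I would take $Y \cap X'$ and $Z \cap X'$, whose union is $X'$ and to both of which the induction hypothesis applies.

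The quantifier cases are the heart of the argument. Given a witness $Y$ for $\M, X \models \exists x \varphi$ --- so $x \in \dom(Y)$, $\exists x Y = \exists x X$, and $\M, Y \models \varphi$ --- I would set $Y' = \set{s \in Y | s \rest (\dom(Y) \setminus \set{x}) \in \exists x X'}$. Unfolding the definition of $\exists x$ and using $\exists x X' \subseteq \exists x X = \exists x Y$ gives $\exists x Y' = \exists x X'$, while $Y' \subseteq Y$ and the induction hypothesis give $\M, Y' \models \varphi$; hence $Y'$ witnesses $\M, X' \models \exists x \varphi$. The same subteam $Y'$ handles $\forall x \varphi$: if instead $\forall x Y = \exists x X$, then the analogous unfolding yields $\forall x Y' = \exists x X' \cap \forall x Y = \exists x X'$, again with $\M, Y' \models \varphi$ from the induction hypothesis.

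With (1) available, (2) is routine: one direction takes $Y = Z = X$, and the other uses $X = Y \cap Z \subseteq Y, Z$ followed by downward closure. For (3), note first that any witness $Y$ has $\dom(Y) = \dom(X) \cup \set{x}$. In the forward direction, for each $s \in X$ the projection $s \rest (\dom(Y) \setminus \set{x})$ lies in $\exists x X = \exists x Y$, so there is some $a \in M$ with $s[a/x] \in Y$; choosing $f(s)$ to be such an $a$ gives $X[f/x] \subseteq Y$, and downward closure yields $\M, X[f/x] \models \varphi$. Conversely, taking $Y = X[f/x]$ one checks $\exists x (X[f/x]) = \exists x X$ directly, separating the cases $x \in \dom(X)$ and $x \notin \dom(X)$. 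Property (4) is parallel: the forward direction shows $X[M/x] \subseteq Y$ from $\forall x Y = \exists x X$ and then applies downward closure, while the converse takes $Y = X[M/x]$ and verifies $\forall x (X[M/x]) = \exists x X$, where the nonemptiness of $M$ is needed for one inclusion. I expect the main obstacle to be precisely this quantifier bookkeeping --- keeping the domain conventions straight (in particular the cases $x \in \dom(X)$ versus $x \notin \dom(X)$, and the treatment of empty teams, for which the definitions fix the domain explicitly) and confirming that the projections $\exists x$ and $\forall x$ behave as claimed on the constructed subteams.
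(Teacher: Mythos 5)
Your proposal is correct and follows essentially the same route as the paper: downward closure (1) is proved by induction with exactly the paper's key construction $Y' = \set{s \in Y \mid s \rest \bar y \in \exists x X'}$ for the quantifier cases (the paper also takes $\bar y = \dom(X)\setminus\set{x}$, which agrees with your $\dom(Y)\setminus\set{x}$ since $\exists x Y = \exists x X$), and (2)--(4) are then derived from (1) by the same witness choices ($Y=Z=X$, $f(s)$ picked via $\exists x X = \exists x Y$ so that $X[f/x]\subseteq Y$, and $X[M/x]\subseteq Y$ from $\forall x Y = \exists x X$, with $Y = X[f/x]$ resp.\ $Y=X[M/x]$ for the converses). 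Your explicit treatment of the $\iland$/$\ilor$ cases of (1) and the identity $\forall x Y' = \exists x X' \cap \forall x Y$ are fine; the paper merely compresses or rephrases these steps.
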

\begin{proof}
	(1) is proved by induction on $\varphi$: All cases are trivial except for the quantifer cases;
	assume $\M, X \models \exists x \varphi$ and that $X' \subseteq X$. 
	Let $Y$ be such that $\exists x Y = \exists x X$ and $\M,Y \models \varphi$. 
	Define 
	$$Y' = \set{ s \in Y | s \rest \bar y \in \exists x X'},$$ where $\bar y = \dom(X) \setminus
	\set{x}$. For $\M,X' \models \exists x \varphi$ we want to prove that $\exists x Y' = \exists x
	X'$, since by the induction hypothesis we have $\M,Y' \models \varphi$.

By definition we get that $\exists x Y'  \subseteq \exists x X'$. For the other inclusion, let $s
\in \exists x X'$ which implies that $s \in \exists x X$ and so $s \in \exists x Y$. Let $s' =
s[a/x] \in Y$ (since $x \in \dom(Y)$). $s' \rest \bar y = s \in \exists x X'$
which proves that $s' \in Y'$ giving us $s \in \exists x Y'$.
And thus $\exists x X' = \exists x Y'$.

For the universal quantifier assume $\M, X \models \forall x \varphi$ and that $X' \subseteq X$. 
	Let $Y$ be such that $\forall x Y = \exists x X$ and $\M,Y \models \varphi$. 
	Define 
	$$Y' = \set{ s \in Y | s \rest \bar y \in \exists x X'},$$ where $\bar y = \dom(X) \setminus
	\set{x}$. To see that $\M,X' \models \forall x \varphi$ we need to prove that $\forall x Y' =
	\exists x	X'$, since by the induction hypothesis we have $\M,Y' \models \varphi$.

By definition we get that $\forall x Y' \subseteq \exists x Y'  \subseteq \exists x X'$. 
For the other inclusion, let $s \in \exists x X'$ which implies that 
$s \in \exists x X$ and so $s \in \forall x Y$. Then $s[a/x] \in Y$ for every $a \in M$ (since $x \in \dom(Y)$). 
Also $s[a/x] \rest \bar y = s \in \exists x X'$
which proves that $s[a/x] \in Y'$ for all $a \in M$; giving us $s \in \forall x Y'$.
And thus $\forall x Y' = \exists x X'$.

(2). Assume $\M,X \models \varphi \iland \psi$ and let $Y \cap Z = X$ be such that $\M,Y \models
\varphi$ and $\M,Z \models \psi$. By the induction hypothesis we get that $\M,(Y \cap X) \models
\varphi$ and $\M,(Z \cap X) \models \psi$ and since $(Y \cap X) \cap (Z \cap X) = Y \cap Z \cap
X = X \cap X = X$ we have that $\M,X \models \varphi \iland \psi$.

For (3) assume first that $\M, X \models \exists x \varphi$ and let $Y$ be such that $\M,Y \models
\varphi$ and $\exists xY = \exists xX$. Define $f': \exists x X \to M$ in such a way that for all
$s \in \exists x X$, $s[f(s)/x] \in Y$. Then let $f:X \to M$ be defined by $f(s) = f'(s\rest \bar
y)$, where $\bar y = \dom(X) \setminus \set{x}$. The team $X[f/x] \subseteq Y$ and by (1) we then
get $\M, X[f/x] \models \varphi$.

For the other direction assume that there is an $f: X \to M$ such that $\M,X[f/x] \models
\varphi$. Let $Y = X[f/x]$, then $\exists x Y = \exists x X$.

(4) Assume that $\M,X \models \forall x \varphi$ and let $Y$ witness that. Then $\forall x Y =
\exists x X$ and thus $(\exists x X) [M/x]\subseteq Y$. However $(\exists x
 X) [M/x] = X[M/x]$. Using (1) we get $\M,X[M/x] \models \varphi$. On the
 other hand if $\M,X[M/x] \models \varphi$, we may chose $Y= X[M/x]$.
\end{proof}

For case (3) to be valid, the natural choice of an arity-reducing
operation in Definition \ref{satdef} appears to be $\exists x$: Let's assume that
another arity-reducing operator was used, denoted by $P(X)$, such that $P(X)
\subseteq \exists x X$. Now consider the following scenario: Suppose $$X =\set{\epsilon}[A/x][a/y] \cup \set{x \mapsto
b, y \mapsto c},$$ where $A \neq \emptyset$ and $a \neq c$. For case (3) to hold we need $$\set{y
\mapsto a} \in P(X)$$. This ensures that $X$ does not satisfy the formula $\exists x (y=c)$. The only reasonable
choice for $P$ seems to be $\exists x$. A similar argument demonstrates that $\exists
x$ is the only reasonable arity-reducing operation to choose in case
\eqref{all} of Definition \ref{satdef}.

Proposition \ref{ordsat} established that the satisfaction relation in this paper is
equivalent to the standard one for dependence logic. Consequently, we retain all the familiar properties associated with dependence logic, including locality:

\begin{prop} $\M,X \models \varphi$ iff $\M,X\rest \FV(\varphi) \models \varphi$. 
\end{prop}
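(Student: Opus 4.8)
The plan is to prove, by induction on the structure of $\varphi$, the slightly stronger statement that $\M,X \models \varphi$ iff $\M, X\rest V \models \varphi$ for \emph{every} set of variables $V$ with $\FV(\varphi) \subseteq V$ (keeping the standing hypothesis $\FV(\varphi)\subseteq\dom(X)$, so that $\FV(\varphi)\subseteq\dom(X\rest V)$). The proposition is then the special case $V = \FV(\varphi)$. The strengthening is what makes the induction go through: in the quantifier cases I will peel off a quantifier and apply the induction hypothesis to the body with the \emph{larger} set $V \cup \set{x}$, which in general is not equal to the set of free variables of the body.

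For the base cases I would use that term values, and hence Tarskian satisfaction of first-order (negated) atoms, depend only on the values of the free variables. Concretely, $\M,s\models\psi$ iff $\M,s\rest V\models\psi$ whenever $\FV(\psi)\subseteq V$, and $t_i^{\M,s}=t_i^{\M,s\rest V}$; since the map $s\mapsto s\rest V$ sends $X$ onto $X\rest V$, clause (1) and the dependence-atom clause (2) of Definition \ref{satdef} give the same verdict on $X$ and on $X\rest V$. For the dependence atom one checks both directions by lifting a pair in $X\rest V$ to a pair in $X$ via this surjection, and conversely using that restriction preserves all term values.

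For the connectives and quantifiers I would work from the reformulation in Proposition \ref{ordsat} rather than directly from Definition \ref{satdef}. Conjunction is immediate from part (2): $\M,X\models\varphi\iland\psi$ iff $\M,X\models\varphi$ and $\M,X\models\psi$, and both conjuncts transfer to $X\rest V$ by the induction hypothesis (using $V\supseteq\FV(\varphi),\FV(\psi)$). Disjunction I would handle from clause (4) of Definition \ref{satdef}: since restriction commutes with unions, a splitting $X=Y\cup Z$ restricts to $X\rest V=(Y\rest V)\cup(Z\rest V)$, and conversely a splitting $X\rest V=Y'\cup Z'$ lifts by setting $Y=\set{s\in X\mid s\rest V\in Y'}$ and $Z=\set{s\in X\mid s\rest V\in Z'}$, which satisfy $Y\cup Z=X$, $Y\rest V=Y'$ and $Z\rest V=Z'$; the induction hypothesis then transfers satisfaction both ways. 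The universal case is clean because $(X[M/x])\rest(V\cup\set{x}) = (X\rest V)[M/x]$ exactly, so part (4) combined with the induction hypothesis for $\varphi$ and $V\cup\set{x}$ closes the case.

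The main obstacle is the existential quantifier, treated via part (3): here restricting to $V$ may collapse distinct assignments of $X$ that agree on $V$, so a witness $f\colon X\to M$ does not descend to a single-valued function on $X\rest V$. For the direction $\M,X\models\exists x\varphi\Rightarrow\M,X\rest V\models\exists x\varphi$ I would choose, for each $t\in X\rest V$, a representative $s\in X$ with $s\rest V=t$ and set $g(t)=f(s)$; then $(X\rest V)[g/x]\subseteq (X[f/x])\rest(V\cup\set{x})$, and since the latter satisfies $\varphi$ by the induction hypothesis, closure under subteams (Proposition \ref{ordsat}, part (1)) yields $\M,(X\rest V)[g/x]\models\varphi$. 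The converse direction is exact: taking $f(s)=g(s\rest V)$ gives $(X[f/x])\rest(V\cup\set{x})=(X\rest V)[g/x]$, and the induction hypothesis transfers satisfaction back to $X[f/x]$. It is precisely this asymmetry---needing subteam closure to absorb the collapse in one direction---that isolates the crux of the argument.
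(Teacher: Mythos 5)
Your proof is correct, but it takes a different route from the paper: the paper does not prove this proposition directly at all. It treats locality as an immediate corollary of Proposition \ref{ordsat}, which shows that the satisfaction relation of Definition \ref{satdef} coincides with the standard team semantics for dependence logic (downward closure, the $f$-witness clause for $\exists x$, the $X[M/x]$ clause for $\forall x$), and then imports locality as a known property of that standard semantics from \cite{Vaananen:2007}. What you do instead is inline the standard inductive argument, routed through the reformulations of Proposition \ref{ordsat}, into a self-contained proof. Your key moves are all the right ones: strengthening the statement to an arbitrary $V \supseteq \FV(\varphi)$ so that the quantifier cases can use $V \cup \set{x}$ rather than $\FV$ of the body; the exact identity $(X[M/x])\rest(V\cup\set{x}) = (X\rest V)[M/x]$ for the universal case; lifting a splitting of $X\rest V$ back to a splitting of $X$ via preimages for $\ilor$; and, in the one genuinely asymmetric step, using a choice of representatives plus downward closure (Proposition \ref{ordsat}(1)) to absorb the collapse of assignments agreeing on $V$ in the existential case. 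The trade-off is the usual one: the paper's appeal to the literature is shorter and makes clear that nothing new is happening, while your version is verifiable within the paper itself and makes explicit exactly where downward closure is needed --- which is worth knowing here, since the whole point of the next section is a logic where downward closure, and with it full locality, fails.
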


We define $\M \models \sigma$ for a sentence $\sigma$ to hold if $\M
,{\set{\epsilon}} \models \sigma$.

\section{A team logic for generalized quantifiers}

The reader may wonder why we took the extra effort of defining dependence
logic in this slightly roundabout way. The main reason for doing so is to set
up the definitions in such a way that only a small alteration gives us a logic
which is better suited for generalized quantifiers.

This small alternation is based on Guideline \ref{g2} which states that a team satisfies a formula iff the
team \emph{is} the semantic value of the formula:
\addtocounter{guideline}{-1}\begin{guideline}
A formula $\varphi$ is satisfied by a team $X$ if for every assignment $s :
\dom(X) \to M^k$, $s \in X$ iff $s$ satisfies $\varphi$.
\end{guideline}

The idea to formulate this ``maximal'' semantics originates from \cite
{engstrom2012generalized} where a semantics for dependence logic with
non-monotone generalized quantifiers is given using a maximality condition on
the truth condition for the generalized quantifier. However, the approach in that paper takes us beyond the realm of existential second-order logic. Instead, we propose to base the semantics on the guideline above.

In his PhD thesis \cite{nurmi2009dependence} Nurmi presents a similar
semantical system: 1-semantics for the syntax of dependence logic. In
formulating 1-semantics, Nurmi uses the syntax of dependence logic with a
dependence atom that is satisfied by a team iff the team is the graph of a
total function. He proves that a team satisfies a formula in dependence logic
iff there is a superset of the team that satisfies the formula in 1-semantics.
In the present paper, instead of using a dependence atom, we opt for an
\emph{external} disjunction and conjunction and restricts the use of negation
 to atoms; but are otherwise using the same basic idea and guiding principle
 for the semantics as Nurmi. However, we do arrive at slightly different
 conclusions, and it appears that these seemingly small differences give
 rise to quite distinct logics.  

\subsection{mt-logic}

We will define a logic, which we will call \emph{mt-logic} for maximal team logic,
using a variation of Definition \ref{satdef} in which clause (1) is replaced
by a clause that is directly copied from Guideline \ref{g2}. Instead of the
dependence atoms, we will introduce two new connectives: $\land$ and $\lor$, representing the
\emph{external} conjunction and disjunction, distinct from the 
internal operators $\iland$ and $\ilor$.

Thus, the set of formulas of this logic is $$ \varphi ::= \mathrm
{At} \mathrel| \lnot \mathrm{At}
\mathrel| \varphi
\iland \varphi \mathrel| \varphi \ilor \varphi \mathrel|
\varphi \land \varphi \mathrel| \varphi \lor \varphi \mathrel | \exists x \,
\varphi \mathrel| \forall x \, \varphi,$$ where $\mathrm{At}$ is an atomic
formula in the language of some given signature. 

We say that a formula is first-order if there is no occurrence of $\land$ nor
$\lor$.

\begin{defin}\label{satdefteam}
The satisfaction relation for mt-logic $\M ,X \models  \varphi$, where
$\FV(\varphi) \subseteq \dom(X)$, is defined as follows.
\begin{enumerate}
\item $ \M ,X 
	\models  \psi \text{ iff } \forall s: \dom(X) \to M  (s \in X \text{ iff }
	\M,s \models \psi)$,  for literals
	$\psi$
%\item $\M ,X \models  \top(\bar x) \text{ iff } \exists \bar x X = \set
% {\epsilon}[M^k/\dom(X) \setminus \set{\bar x}]$
\item $\M ,X \models  \varphi \iland \psi \text{ iff }\exists Y,Z \text{ s.t.  
	} X= Y \cap Z; \M ,Y \models \varphi \text{ and } \M 
	,Z \models  \psi$
\item $\M ,X \models  \varphi \ilor \psi \text{ iff } \exists Y,Z  \text{ s.t.  
	} X= Y \cup Z; \M,Y \models  \varphi \text{ and } \M,Z 
	\models \psi$
\item $\M ,X \models  \varphi \land \psi \text{ iff } \M ,X \models \varphi \text{ and } \M 
	,X \models  \psi$
\item $\M ,X \models  \varphi \lor \psi \text{ iff } \M,X \models  \varphi \text{ or } \M,X 
	\models \psi$
\item\label{exteam} $\M ,X \models  \exists x \varphi \text{ iff } \exists Y
\text{ s.t. } x \in \dom(Y),
	\exists x Y = \exists x X \text{ and } \M,Y \models \varphi$
\item\label{allteam} $\M ,X \models  \forall x \varphi \text{ iff } \exists Y
\text{ s.t. } x \in \dom(Y),
	\forall x Y = \exists x X \text{ and } \M,Y \models \varphi$
\end{enumerate}
\end{defin}
If there is need, we will denote this satisfaction relation by
$\models_\text{mt}$ and the relation for dependence logic by $\models_D$. We
will also often skip to mention the model $\M$ if that is understood from the
context.

Instead of using arity-reducing operations on teams in the quantifier clauses
we may use arity-increasing operators as in Proposition \ref{ordsat}; however,
the definition for the universal quantifier then becomes more involved:

\begin{prop} For $\varphi$ in $T[\tau]$ we have:
	\begin{itemize}
		\item $\M,X \mmodels \exists x \varphi$ iff there exists a function $F: \exists xX \to
			\power(M) \setminus \set{\emptyset}$ such that $(\exists xX)[F/x] \mmodels \varphi$.
		\item $\M,X \mmodels \forall x \varphi$ iff there exists a function $F: (\exists
			xX)^c \to
			\power(M) \setminus \set{M}$ such that $(\exists xX)[M/x] \cup (\exists x X)^c[F/x] \mmodels \varphi$.
	\end{itemize}
\end{prop}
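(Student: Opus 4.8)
The plan is to prove both equivalences by matching the witnessing teams $Y$ that appear in clauses \eqref{exteam} and \eqref{allteam} of Definition \ref{satdefteam} with the functions $F$ named in the statement. Write $X_0 = \exists x X$, so $x \notin \dom(X_0)$. In both bullets we only consider teams $Y$ with $x \in \dom(Y)$ and $\dom(Y) = \dom(X_0) \cup \set{x}$; for such $Y$ every $t \in Y$ restricts to $t \rest \dom(X_0) : \dom(X_0) \to M$, and the ``or $s \in Y$'' disjuncts in the definitions of $\exists x Y$ and $\forall x Y$ are vacuous, since an assignment whose domain omits $x$ cannot belong to $Y$. Hence $\exists x Y = \set{s : \dom(X_0) \to M | \exists a\, s[a/x] \in Y}$ and $\forall x Y = \set{s : \dom(X_0) \to M | \forall a\, s[a/x] \in Y}$, and everything reduces to a fibrewise analysis of $Y$ over $\dom(X_0)$.

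For the existential clause I would show that the teams $Y$ with $x \in \dom(Y)$ and $\exists x Y = X_0$ are exactly those of the form $X_0[F/x]$ with $F : X_0 \to \power(M) \setminus \set{\emptyset}$. Given such an $F$, the fibre of $X_0[F/x]$ over $s \in X_0$ is $F(s) \neq \emptyset$, so $\exists x (X_0[F/x]) = X_0$. Conversely, a witness $Y$ yields the function $F(s) = \set{a \in M | s[a/x] \in Y}$ on $X_0$; the equality $\exists x Y = X_0$ forces $F(s) \neq \emptyset$ for all $s \in X_0$, and one checks $Y = X_0[F/x]$. Plugging this correspondence into clause \eqref{exteam} gives the first bullet.

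For the universal clause I would analyse $Y$ fibre-by-fibre. The condition $\forall x Y = X_0$ says precisely that the fibre $\set{a \in M | s[a/x] \in Y}$ equals $M$ for each $s \in X_0$ and is a proper subset of $M$ for each $s$ in the complement $X_0^c$ (taken inside $\dom(X_0) \to M$). Partitioning $Y$ according to whether $t \rest \dom(X_0)$ lies in $X_0$ or in $X_0^c$ then gives $Y = X_0[M/x] \cup X_0^c[F/x]$, where $F : X_0^c \to \power(M) \setminus \set{M}$ records the proper fibres $F(s) = \set{a \in M | s[a/x] \in Y}$; conversely any such $F$ produces a team of this shape satisfying $\forall x Y = X_0$. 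Substituting into clause \eqref{allteam} gives the second bullet.

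I expect the universal case to be the main obstacle: whereas the existential witness is supported exactly on $X_0$, the universal witness is forced to carry a \emph{full} fibre over every point of $X_0$ and a \emph{proper} fibre over every point of $X_0^c$, so the two-part decomposition $X_0[M/x] \cup X_0^c[F/x]$ is mandatory rather than a free choice. I would also be careful with the degenerate cases — when $X_0$ or $X_0^c$ is empty, and with the domain bookkeeping $\dom(X_0[F/x]) = \dom(X_0) \cup \set{x}$ — to confirm that functions $F$ with the stated codomains $\power(M)\setminus\set{\emptyset}$ and $\power(M)\setminus\set{M}$ exist exactly when witnesses do; these checks are routine but are precisely where those two codomain restrictions are needed.
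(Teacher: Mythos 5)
Your proposal is correct and fills in exactly the argument the paper leaves implicit (its proof is just ``Directly from the definitions''): identifying the witness teams $Y$ of clauses \eqref{exteam} and \eqref{allteam} fibrewise with the functions $F$, including the observation that the ``or $s\in Y$'' disjuncts are vacuous and that the universal witness must carry full fibres over $\exists x X$ and proper fibres over its complement. Nothing further is needed.
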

\begin{proof}
Directly from the definitions.
\end{proof}

We note that satisfaction of mt-logic can be expressed in existential second
order logic, ESO, i.e., using $\Sigma_1^1$-formulas:

\begin{prop}
For every formula in mt-logic $\varphi$ in the signature $\tau$ with $n$ free
variables there is a $\Sigma_1^1$ formula $\Theta$ in the language of $\tau
\cup \set{R}$, where $R$ is $n$-ary, such that for all $\M$ and $X$: $$\M ,X
\mmodels \varphi \text{ iff } (\M,\rel(X)) \models \Theta.$$
\end{prop}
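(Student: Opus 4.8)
The plan is to prove a slightly stronger statement by induction on the structure of $\varphi$. Fix a tuple $\bar x = x_1,\ldots,x_m$ of distinct variables, ordered by increasing index, with $\FV(\varphi)\subseteq\set{\bar x}$. Teams $X$ with $\dom(X)=\set{\bar x}$ are in bijection with the $m$-ary relations on $M$ via $X\mapsto\rel(X)=X(\bar x)$, and under this bijection team intersection and union correspond to relation intersection and union, while the restriction $Y\mapsto Y\rest\bar x$ (equivalently $\exists x Y$ when $x\in\dom(Y)$) corresponds to projecting away the coordinate of $x$. I will construct for each such $\bar x$ a $\Sigma_1^1$ formula $\Theta_{\varphi,\bar x}(R)$, with $R$ an $m$-ary relation symbol, such that $\M,X\mmodels\varphi$ iff $(\M,\rel(X))\models\Theta_{\varphi,\bar x}(R)$ for every $X$ with $\dom(X)=\set{\bar x}$. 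Taking $\bar x$ to enumerate $\FV(\varphi)$ then yields the proposition; the extra generality is needed because passing under a quantifier enlarges the domain by the bound variable.

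For a literal $\psi$, clause (1) says that $X$ is exactly the team of assignments satisfying $\psi$, so I set $\Theta_{\psi,\bar x}(R):=\forall\bar x\,(R\bar x\leftrightarrow\psi)$, which is first-order and hence $\Sigma_1^1$ (throughout, $\land,\lor,\lnot,\leftrightarrow$ denote the ordinary logical connectives, not the mt-connectives). The external connectives are immediate from clauses (4)–(5): $\Theta_{\varphi\land\psi,\bar x}:=\Theta_{\varphi,\bar x}\land\Theta_{\psi,\bar x}$ and $\Theta_{\varphi\lor\psi,\bar x}:=\Theta_{\varphi,\bar x}\lor\Theta_{\psi,\bar x}$, both $\Sigma_1^1$ after pulling the second-order quantifier blocks to the front (renaming to avoid clashes). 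For the internal connectives, clauses (2)–(3) ask for witnessing teams $Y,Z$ with $X=Y\cap Z$ (resp.\ $X=Y\cup Z$) and the same domain as $X$; rendering $Y,Z$ as fresh $m$-ary relation symbols $S,T$ gives
$$\Theta_{\varphi\iland\psi,\bar x}(R):=\exists S\,\exists T\,\bigl(\forall\bar x\,(R\bar x\leftrightarrow(S\bar x\land T\bar x))\land\Theta_{\varphi,\bar x}(S)\land\Theta_{\psi,\bar x}(T)\bigr),$$
and likewise for $\ilor$ with $\land$ replaced by $\lor$ in the biconditional; these are again $\Sigma_1^1$.

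The quantifier clauses carry the real content. Assume $\dom(X)=\set{\bar x}$, so $x\notin\set{\bar x}$ and $\exists x X=X$, and let $\bar x x$ denote $\set{\bar x}\cup\set x$ ordered by increasing index. A witness $Y$ for clause \eqref{exteam} has $\dom(Y)=\set{\bar x,x}$ and satisfies $\exists x Y=X$, i.e.\ $\rel(X)$ is the projection of $\rel(Y)$ forgetting the $x$-coordinate; rendering $Y$ as a fresh $(m{+}1)$-ary symbol $S$ gives
$$\Theta_{\exists x\varphi,\bar x}(R):=\exists S\,\bigl(\forall\bar x\,(R\bar x\leftrightarrow\exists x\,S\bar x x)\land\Theta_{\varphi,\bar x x}(S)\bigr).$$
For the universal quantifier, clause \eqref{allteam} requires $\forall x Y=\exists x X=X$, i.e.\ $\rel(X)$ consists exactly of those tuples all of whose $x$-extensions lie in $\rel(Y)$, whence
$$\Theta_{\forall x\varphi,\bar x}(R):=\exists S\,\bigl(\forall\bar x\,(R\bar x\leftrightarrow\forall x\,S\bar x x)\land\Theta_{\varphi,\bar x x}(S)\bigr).$$
In both cases $\Theta_{\varphi,\bar x x}$ is $\Sigma_1^1$ by induction, and prefixing one existential second-order quantifier together with a first-order matrix preserves $\Sigma_1^1$.

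I expect the only delicate point to be the bookkeeping forced by the convention $\rel(X)=X(\bar x)$ with $\bar x$ in increasing-index order: when the bound variable enters the domain it is inserted at the position dictated by its index rather than appended at the end, so the argument lists of the relation symbols and the placement of the quantified variable in the glue formulas must be tracked accordingly. This is purely notational — it disappears once one agrees that every atom lists its arguments in increasing-index order (as written above), or after renaming bound variables to have the largest indices. Granting this, each inductive step is a direct transcription of the corresponding semantic clause, and closure of $\Sigma_1^1$ under conjunction, disjunction, first-order quantification, and additional existential second-order quantification shows that every $\Theta_{\varphi,\bar x}$ is $\Sigma_1^1$, completing the induction.
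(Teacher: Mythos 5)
Your induction is the standard relational encoding, and it matches what the paper intends (the paper itself gives no proof, deferring to the analogous result for dependence logic in V\"a\"an\"anen's book); the base case, the external and internal connectives, and the overall shape of the quantifier clauses are all transcribed correctly from Definition \ref{satdefteam}. There is, however, one unjustified step: in the quantifier cases you assert that $\dom(X)=\set{\bar x}$ gives $x\notin\set{\bar x}$. It does not. Your induction hypothesis quantifies over all tuples $\bar x$ with $\FV(\varphi)\subseteq\set{\bar x}$, and the tuples actually produced by the recursion consist of the free variables of the top formula together with the bound variables collected on the way down; so for a formula that is not untangled --- say $\exists x\,\exists x\,\psi$, or $(y=c)\iland\exists y\,(y=y)$ --- some quantifier step is reached with the bound variable already in $\set{\bar x}$. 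In that case $\exists x X$ is a genuine projection, the witness $Y$ of clause \eqref{exteam} has $\dom(Y)=\dom(X)$, so $S$ must be $m$-ary rather than $(m{+}1)$-ary, and the glue formula must equate two projections, e.g.\ $\forall\bar y\,(\exists x\,R\bar x\leftrightarrow\exists x\,S\bar x)$ with $\bar y$ enumerating $\set{\bar x}\setminus\set{x}$ (and $\forall x\,S\bar x$ in the universal case); your displayed formulas are wrong for this case.

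The repair is a routine case split on whether $x\in\set{\bar x}$, but note that your fallback remedy --- renaming bound variables --- is not available in this logic: mt-logic is not local, and satisfaction of $\exists x\,\varphi$ genuinely depends on whether $x\in\dom(X)$. The paper's own example makes this concrete: over a team with domain $\set{x}$, the sentence $\exists x\,(x=x)$ is satisfied by every nonempty team, whereas the renamed variant $\exists x'\,(x'=x')$ is satisfied only by the full team, since its witness must be the full team on $\set{x,x'}$. So the extra case has to be written out rather than renamed away. With that addition (and the notational care about coordinate order that you already flag) the proof is complete.
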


The proof follows the proof of the similar result regarding dependence logic
in \cite{Vaananen:2007}. The details are left to the reader.

As mentioned above the basic idea behind this logic is that for first-order
formulas $\varphi$, $X \models \varphi$ iff $X$ \emph{is} the semantic value
of $\varphi$, $\scott{\varphi}^{\M}$. This is indeed true for a large class of
formulas, but not true in other cases: Assume $\dom(X)= \set{x}$ then $\M,X
\models \exists x (x=x)$ iff there is $Y$ s.t.\ $x \in \dom(Y)$, $\exists x Y
= \exists x X$ and $\M,Y \models x=x$, i.e., iff $\exists x X =
\set{\epsilon}$. Thus any non-empty $X$ satisfies the sentence.

For this reason we will single out a subclass of formulas that we call
\emph{untangled}: A formula  is untangled if no quantifier $Qx$ appears in the
 scope of another quantifier $Q'x$ and no variable is both free and
 bound.\footnote{Here $Q$ and $Q'$ are either $\forall$ or $\exists$.}

The next proposition shows that untangled first-order formulas are
well-behaved. Here $\BV(\varphi)$ is used to denote the set of bound variables
of $\varphi$.

\begin{prop}\label{lem1} Assume $\varphi$ is an untangled first-order formula in mt-logic and $X$ a
team such that $\dom(X)\cap \BV(\varphi) = \emptyset$. Then
			$$X \mmodels \varphi \text{ iff } X = \scott{\varphi}^{\M}_{\dom(X)}.$$
\end{prop}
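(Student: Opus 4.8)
The plan is to prove the equivalence $X \mmodels \varphi \iff X = \scott{\varphi}^{\M}_{\dom(X)}$ by induction on the structure of the untangled first-order formula $\varphi$, where throughout we maintain the hypothesis that $\dom(X) \cap \BV(\varphi) = \emptyset$. The base case is literals: by clause (1) of Definition \ref{satdefteam}, $X \mmodels \psi$ says precisely that for every $s : \dom(X) \to M$ we have $s \in X$ iff $\M, s \models \psi$, which is exactly the assertion that $X = \set{s : \dom(X) \to M \mid \M, s \models \psi} = \scott{\psi}^{\M}_{\dom(X)}$. Note that since $\FV(\varphi) \subseteq \dom(X)$ is assumed, the semantic value is well-defined with the right domain.

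For the connectives $\iland$ and $\ilor$, the idea is to combine the induction hypothesis with the corresponding operations on denotations. For $\iland$: by definition $X \mmodels \varphi \iland \psi$ iff there are $Y, Z$ with $X = Y \cap Z$, $Y \mmodels \varphi$ and $Z \mmodels \psi$; here $\dom(Y) = \dom(Z) = \dom(X)$ is forced, and since $\BV(\varphi), \BV(\psi) \subseteq \BV(\varphi \iland \psi)$ are disjoint from $\dom(X)$, the induction hypothesis applies to give $Y = \scott{\varphi}^{\M}_{\dom(X)}$ and $Z = \scott{\psi}^{\M}_{\dom(X)}$, whence $X = \scott{\varphi}^{\M}_{\dom(X)} \cap \scott{\psi}^{\M}_{\dom(X)} = \scott{\varphi \iland \psi}^{\M}_{\dom(X)}$; the converse direction chooses $Y, Z$ to be these canonical denotations. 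The case $\ilor$ is dual, using $\cup$ in place of $\cap$, and relies on the fact that the semantic value of a disjunction is the union of the semantic values.

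The quantifier cases are where the untangledness hypothesis does real work, and I expect the existential case (and its universal twin) to be the main obstacle. Consider $\exists x \varphi$. Because the formula is untangled and $x \in \BV(\exists x \varphi)$ is disjoint from $\dom(X)$, we have $x \notin \dom(X)$, so $\exists x X = X$. By clause \eqref{exteam}, $X \mmodels \exists x \varphi$ iff there is $Y$ with $x \in \dom(Y)$, $\exists x Y = \exists x X = X$, and $Y \mmodels \varphi$. Now $\dom(Y) = \dom(X) \cup \set{x}$, and crucially untangledness guarantees that $x$ is not bound in $\varphi$ and that $\dom(X) \cap \BV(\varphi) = \emptyset$ still holds for the enlarged domain $\dom(Y)$ (since $\BV(\varphi) \subseteq \BV(\exists x \varphi) \setminus \set{x}$, which remains disjoint from $\dom(Y) = \dom(X) \cup \set{x}$ as $x$ itself is not bound inside $\varphi$). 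Thus the induction hypothesis applies to $Y$, forcing $Y = \scott{\varphi}^{\M}_{\dom(Y)}$. The equivalence then reduces to the claim $X = \exists x \scott{\varphi}^{\M}_{\dom(Y)}$, and by the first lemma of the excerpt $\exists x \scott{\varphi}^{\M}_{\dom(Y)} = \scott{\exists x \varphi}^{\M}_{\dom(X)}$, giving the result. The key subtlety to check carefully is that the witness $Y$ is \emph{uniquely} determined once we know $Y$ must equal the full denotation: the definition only requires $\exists x Y = X$, but the induction hypothesis pins $Y$ down exactly, and one verifies that this canonical $Y$ indeed satisfies $\exists x Y = X$ precisely when $X = \scott{\exists x \varphi}^{\M}_{\dom(X)}$. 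The universal case is handled identically, replacing clause \eqref{exteam} by \eqref{allteam} and the operation $\exists x$ on denotations by $\forall x$, again invoking the first lemma to match $\forall x \scott{\varphi}^{\M}$ with $\scott{\forall x \varphi}^{\M}$.
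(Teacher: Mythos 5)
Your proposal is correct and follows essentially the same route as the paper: induction on the untangled formula, with the base case read off directly from clause (1) of Definition \ref{satdefteam}, the $\iland$/$\ilor$ cases handled by noting the witnesses are forced to be the canonical denotations, and the quantifier cases using $x\notin\dom(X)$ (so $\exists x X = X$) together with the lemma $\exists x\scott{\psi}^{\M}_{\dom(X)\cup\{x\}}=\scott{\exists x\psi}^{\M}_{\dom(X)}$. You are in fact somewhat more careful than the paper in checking that untangledness keeps the hypothesis $\dom(Y)\cap\BV(\psi)=\emptyset$ intact when the domain is enlarged by $x$.
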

\begin{proof}
This is proved by a direct induction over formulas. The base case is follows directly
from the definitions and the cases for $\iland$ and $\ilor$ follows easily.
Let us do the $\exists$ case: If $\varphi$ is $\exists x \psi$, then $x \notin
\BV(\psi)$ and so by the induction hypothesis $Y \models \psi$ iff
$Y=\scott{\psi}^{\M}_{\dom(Y)}$%whenever $\dom(Y)=\dom(X) \cup \{x\}$.
Since $x \notin \dom(X)$ we have that $X \models \exists x \psi$ iff there is
$Y$ such that $\exists xY = X$ and $Y=\scott{\psi}^{\M}_{\dom(X) \cup \{x\}}$.
This is true iff 
\begin{equation*}
X= \exists x \scott{\psi}^{\M}_{\dom(X)\cup\{x\}} =
\scott{\exists x\psi}^{\M}_{\dom(X)}.\qedhere
\end{equation*}
\end{proof}

Even though mt-logic does not exhibit full locality, it does have a weaker
locality property that is outlined in the next proposition. In fact, in the
upcoming section, we will use the absence of complete locality to interpret
the independence atom.

\begin{prop}\label{lem2}
Assume $\varphi$ is a formula in mt-logic
% and $X$ a team such that $\dom(X)\cap \BV(\varphi) = \emptyset$.  If 
and $x \in \dom(X)$ does not occur in $\varphi$, then $$X \mmodels \varphi 
		 \text{ iff } \exists x X \models \varphi  
		 \text{ and } \forall x X = \exists x X.$$
Also, if  $\bar w = \dom(X)\setminus (\FV(\varphi) \cup \BV(\varphi))$ 
then $$X \mmodels \forall \bar w
			\varphi \text{ iff } \exists \bar w X \mmodels \varphi.$$ 
\end{prop}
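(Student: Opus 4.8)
The plan is to prove the first equivalence by induction on $\varphi$ and then to derive the second statement from it. Throughout it helps to rephrase the side condition: since $\forall x X \subseteq \exists x X$ always, and $(\exists x X)[M/x]$ is the largest team with a given $x$-projection, one checks that $\forall x X = \exists x X$ is equivalent to $X = (\exists x X)[M/x]$; call a team with this property \emph{$x$-saturated} (the condition holds vacuously when $x \notin \dom(X)$). So the content of the first statement is that, when $x$ does not occur in $\varphi$, satisfaction of $\varphi$ forces $X$ to be $x$-saturated and to depend only on $\exists x X$.

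For the base case $\varphi$ a literal, clause (1) of Definition \ref{satdefteam} gives $X \mmodels \varphi$ iff $X = \scott{\varphi}^{\M}_{\dom(X)}$. As $x \notin \FV(\varphi)$, membership of an assignment in $\scott{\varphi}^{\M}_{\dom(X)}$ is independent of its value at $x$, so this team is $x$-saturated and projects to $\scott{\varphi}^{\M}_{\dom(X)\setminus\set{x}}$; both directions follow. For $\land$ and $\lor$ the condition $\forall x X = \exists x X$ is common to both subformulas and factors out of the induction hypothesis (using that $(A \land C) \lor (B \land C) = (A \lor B) \land C$). For $\iland$ and $\ilor$ I would use that $x$-saturated teams are closed under $\cap$ and $\cup$ and that $\exists x$ distributes over $\cap$ and $\cup$ on such teams (for $\cup$ no saturation is needed, for $\cap$ it is): writing $X = Y \cap Z$ or $X = Y \cup Z$ with $Y,Z$ from the induction hypothesis gives $\exists x X = \exists x Y \cap \exists x Z$ (resp. $\cup$) together with $x$-saturation of $X$, and conversely splitting $\exists x X$ and applying $[M/x]$ recovers the witnesses.

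The quantifier case is the crux. Take $\varphi = \exists y \chi$ (the case $\forall y \chi$ is analogous); since $x$ does not occur in $\varphi$ we have $y \neq x$ and $x$ does not occur in $\chi$, and—as in Proposition \ref{lem1}, where the parallel hypothesis $\dom(X)\cap\BV(\varphi) = \emptyset$ is stated explicitly—the bound variable is fresh, so $y \notin \dom(X)$ and $\exists y X = X$. If $X \mmodels \exists y \chi$, pick a witness $W$ with $y \in \dom(W)$, $\exists y W = X$ and $W \mmodels \chi$; by the induction hypothesis $W$ is $x$-saturated and $\exists x W \mmodels \chi$. The real work is to transport $x$-saturation through the $y$-operation: writing $W = (\exists x W)[M/x]$ and using that $\exists y$ commutes with $[M/x]$ for $y \neq x$ gives $X = \exists y W = (\exists y \exists x W)[M/x]$, so $X$ is $x$-saturated; moreover $\exists x W$ witnesses $\exists x X \mmodels \exists y \chi$ once one checks $\exists y(\exists x W) = \exists x X = \exists y(\exists x X)$ via commutativity of $\exists x$ and $\exists y$ and $y \notin \dom(\exists x X)$. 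Conversely, from $\exists x X \mmodels \exists y \chi$ and $x$-saturation of $X$, take a witness $W_0$ for $\exists x X$, set $W = W_0[M/x]$, apply the induction hypothesis to get $W \mmodels \chi$, and use $\exists y W = (\exists y W_0)[M/x] = (\exists x X)[M/x] = X$, the last equality being exactly $x$-saturation of $X$. I expect this interplay—commuting $\exists x$ (or $\forall x$) with the $y$-operations while preserving $x$-saturation—to be the only genuinely delicate point, and it is precisely where freshness of the bound variable is used.

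Finally I would obtain the second statement from the first by induction on the length of $\bar w$. For a single $w \in \dom(X)$ not occurring in $\varphi$: if $X \mmodels \forall w \varphi$ with witness $Y$ (so $w \in \dom(Y)$, $\forall w Y = \exists w X$, $Y \mmodels \varphi$), the first statement applied to $Y$ yields $\forall w Y = \exists w Y$ and $\exists w Y \mmodels \varphi$, whence $\exists w X = \exists w Y \mmodels \varphi$; conversely, if $\exists w X \mmodels \varphi$ then $Y = (\exists w X)[M/w]$ is $w$-saturated with $\exists w Y = \exists w X \mmodels \varphi$, so $Y \mmodels \varphi$ by the first statement and $\forall w Y = \exists w Y = \exists w X$ exhibits $X \mmodels \forall w \varphi$. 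Iterating over $\bar w = w_1,\ldots,w_k$—using that each $w_i$ still fails to occur in $\forall w_{i+1}\cdots w_k\,\varphi$ and that $\exists w_k\cdots\exists w_1 X = \exists \bar w X$—gives $X \mmodels \forall \bar w \varphi$ iff $\exists \bar w X \mmodels \varphi$.
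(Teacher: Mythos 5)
Your proof is correct and follows essentially the same route as the paper's: induction on $\varphi$ with the literal case handled via Proposition \ref{lem1}, distribution of $\exists x$ over $\cap$ and $\cup$ for $\iland$ and $\ilor$, the $Y'=Y[M/x]$ saturation-and-commutation trick for the quantifier cases, and the second claim derived from the first (the paper applies the first part to the whole tuple $\bar w$ at once where you iterate one variable at a time, a cosmetic difference). The freshness assumption $y \notin \dom(X)$ that you state explicitly in the quantifier case is exactly the tacit side condition the paper's own proof relies on (it writes $\exists y Y = X$ rather than $\exists y Y = \exists y X$), so you have reproduced the intended argument, including making visible its one unstated hypothesis.
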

\begin{proof}
The first statement is proved by induction over formulas. The base case is taken care of 
by Proposition \ref{lem1} since any formula without quantifiers is untangled. The
inductive steps are easy for $\land$ and $\lor$. For the other cases:

Assume $X \models \varphi \ilor \psi$. Then there are $Y
\cup Z=X$ such that $Y \models \varphi$ and $Z \models \psi$, and so by the
induction hypothesis $\exists xY = \forall xY$, $\exists
xY \models \varphi$, $\exists xZ=\forall xZ$, and $\exists xZ\models \psi$.
 Now, $$ \exists x X = \exists x(Y \cup Z) = \exists xY \cup \exists xZ
 =\forall xY \cup \forall xZ \subseteq \forall x (Y \cup Z)=\forall x X,$$
 and thus, since $\forall x X \subseteq \exists x X$ holds in general,
 $\exists x X = \forall x X$. Since $\exists x X = \exists x Y \cup \exists
 xZ$, $\exists xY \models \varphi$ and $\exists xZ\models \psi$ we also have
 $\exists xX \models \varphi \ilor \psi$.

For the other direction assume $\exists xX \models \varphi \ilor \psi$ and
$\exists x X = \forall x X$. Then there are $Y\cup Z = \exists xX$ such that
$Y \models \varphi$ and $Z \models \psi$. Let $Y'=Y[M/x]$ and $Z'=Z
[M/x]$, then $\exists xY'=\forall xY'=Y$ and so $Y' \models \varphi$ by the
induction hypothesis. Thus $Y' \cup Z' \models \varphi \ilor \psi$ and
$Y' \cup Z' = X$.
			
The case of $\varphi \iland \psi$ is similar. 
			
Assume $X \models \exists y \varphi$. Then there is $Y\models \varphi$ such
that $\exists y Y = X$. By the induction hypothesis we know that $\exists
xY \models \varphi$ and $\exists xY = \forall xY$. Thus, \begin
{equation}\label{eq:1} \exists x X = \exists x \exists y Y = \exists y\exists
xY=\exists y \forall xY \subseteq \forall x\exists y Y = \forall x X. \end
{equation} But clearly $\forall x X \subseteq \exists x X$ and so $\forall x
X = \exists x X$. Also, $\exists x Y \models \varphi$ and $\exists y\exists
xY = \exists x X$ by \eqref{eq:1} and thus, $\exists x X \models \exists
y \varphi$.

For the other direction assume $\exists xX \models \exists y \varphi$ and
$\exists x X = \forall x X$. Then there is $Y \models \varphi$ such that
$\exists y Y = \exists x X$. Let $Y'=Y[M/x]$, then $\exists xY' = \forall x
Y'=Y$ and so, by the induction hypothesis $Y' \models \varphi$. Thus,
$\exists y Y' \models \exists y \varphi$ and $$\exists yY'= \exists y(Y
[M/x])=(\exists yY)[M/x] = (\exists x X)[M/x] = X.$$

Assume $X \models \forall y \varphi$. Then there is $Y\models \varphi$ such
that $\forall y Y = X$. By the induction hypothesis we know that $\exists
xY \models \varphi$ and $\exists xY = \forall xY$. Thus, \begin
{equation}\label{eq:2} \exists x X = \exists x \forall y Y \subseteq \forall
y\exists x Y=\forall y \forall xY = \forall x\forall y Y = \forall x X. \end
{equation} But clearly $\forall x X \subseteq \exists x X$ and so $\forall x
X = \exists x X$. Also, $\exists x Y \models \varphi$ and $\forall y\exists
xY = \exists x X$ by \eqref{eq:2} and thus, $\exists x X \models \forall
y \varphi$.

For the other direction assume $\exists xX \models \forall y \varphi$ and
$\exists x X = \forall x X$. Then there is $Y \models \varphi$ such that
$\forall y Y = \exists x X$. Let $Y'=Y[M/x]$, then $\exists xY' = \forall x
Y'=Y$ and so, by the induction hypothesis $Y' \models \varphi$. Thus,
$\forall y Y' \models \forall y \varphi$ and $$\forall yY'= \forall y(Y
[M/x])=(\forall yY)[M/x] = (\exists x X)[M/x] = X.$$
		
For the also-part we note that $X \models \forall \bar w \varphi$ iff there is
$Y$ such that $\forall \bar w Y =\exists \bar w X$ and $Y \models \varphi$. By
the previous part of the proof, $Y \models \varphi$ iff $\forall \bar w Y =
\exists \bar wY$ and $\exists \bar wY \models \varphi$. Thus, the left to
right implication follows directly. For the other, assume $\exists \bar w X
\models \varphi$ and let $Y=(\exists \bar w X)[M/\bar w]$. Then $Y \models
\varphi$ and $\forall \bar w Y= \exists \bar w Y=\exists \bar w X$. Thus, $X
\models \varphi$.
\end{proof}

Observe also that for all $\varphi$ there is a team $X$ such that $\M,X
\mmodels \varphi$, this follows by an easy inductive argument.

\subsection{Relationship with dependence and independence logic}

Next, we investigate the relationship between mt-logic on the one hand and
dependence and independence logic on the other. We will work our way towards
interpretations of dependence and independence logic in mt-logic. Let us
first see how to deal with atoms of dependence logic that are closed
downwards in mt-logic:

\begin{prop}
	If $\psi$ is a first-order formula then
\begin{equation}\label{eq_literal}
X \models \psi \iland \exists \bar w (w_0=w_0 \lor w_0 \neq w_0)\text{ iff }
X \subseteq \scott{\psi}_{\bar w},
%\forall s \in X: s \models \psi,
\end{equation}
where $\bar w$ is $\dom(X)$ and $w_0 \in \bar w$.
\end{prop}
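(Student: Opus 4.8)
The plan is to abbreviate $\chi := (w_0 = w_0) \lor (w_0 \neq w_0)$ and $\theta := \exists \bar w\, \chi$, and to prove the two inclusions by exhibiting explicit witness teams for the internal conjunction $\iland$. The crux is the following claim about the ``padding'' sentence: \emph{every} team $Z$ with $\dom(Z) = \bar w$ satisfies $\theta$. First I would observe that, by clause~(1) of Definition~\ref{satdefteam}, the literal $w_0 = w_0$ is satisfied only by the full team over its domain and $w_0 \neq w_0$ only by the empty team; hence, by the clause for the external disjunction, $\chi$ is satisfied both by the full team and by the empty team over any domain containing $w_0$. Since $\theta$ quantifies out \emph{all} of $\bar w$, the requirement $\exists \bar w\, W = \exists \bar w\, Z$ on a witness collapses to ``$W$ is nonempty exactly when $Z$ is''. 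For nonempty $Z$ I would build the witness chain from the outside in: peeling the quantifiers $\exists v_1, \exists v_2, \ldots$ off $\bar w$, the witness for the $i$-th quantifier is the full cylindrification $A_i[M/v_1]\cdots[M/v_i]$ of $A_i := \exists v_1 \cdots \exists v_i Z$. One checks that this choice meets the projection constraints $\exists v_i Y_i = \exists v_i Y_{i-1}$, and that when all of $\bar w$ has been exhausted the innermost witness is the full team over $\bar w$, which satisfies $\chi$; for empty $Z$ the empty team serves throughout.

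This step is where I expect the main difficulty to lie, precisely because mt-logic is not local: one \emph{cannot} rename the bound variables $\bar w$ to fresh ones, since that would change the satisfying teams (by Proposition~\ref{lem2} the renamed sentence would be satisfied only by teams with $\forall \bar w\, X = \exists \bar w\, X$). Thus the coincidence of the quantified variables with $\dom(X)$ must be used essentially, and the witness chain has to be produced by hand rather than inferred from a locality principle.

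Next I would record two facts about a first-order $\psi$ with $\FV(\psi) \subseteq V := \bar w$, for teams $Y$ with $\dom(Y) = V$: (I) if $Y \models \psi$ then $Y \subseteq \scott{\psi}^{\M}_{V}$, and (II) $\scott{\psi}^{\M}_{V} \models \psi$. For literals both are immediate from clause~(1), which is the only case needed for the intended application. In general they follow by induction on $\psi$: for (I) the cases of $\iland$, $\ilor$ use $\scott{\alpha \iland \beta}_V = \scott{\alpha}_V \cap \scott{\beta}_V$ and $\scott{\alpha \ilor \beta}_V = \scott{\alpha}_V \cup \scott{\beta}_V$, while the quantifier cases use that a witness $W \subseteq \scott{\alpha}$ projects into $\scott{\exists x\alpha}$ (resp.\ $\scott{\forall x\alpha}$); for (II) one takes $\scott{\alpha}_V, \scott{\beta}_V$ as witnesses for the connectives and $\scott{\alpha}_{V \cup \{x\}}$ (resp.\ $\scott{\alpha}_V$) for the quantifiers, invoking $\exists x\, \scott{\alpha}_{V\cup\{x\}} = \scott{\exists x \alpha}_V$.

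Finally I would combine these. For the forward direction, if $X \models \psi \iland \theta$ then $X = Y \cap Z$ with $Y \models \psi$ and $Z \models \theta$ and $\dom(Y) = \dom(Z) = \bar w$, whence $X \subseteq Y \subseteq \scott{\psi}^{\M}_{\bar w}$ by~(I). For the converse, given $X \subseteq \scott{\psi}^{\M}_{\bar w}$, I would set $Y := \scott{\psi}^{\M}_{\bar w}$ and $Z := X$; then $Y \models \psi$ by~(II), $Z \models \theta$ by the padding claim, and $Y \cap Z = \scott{\psi}^{\M}_{\bar w} \cap X = X$, so $X \models \psi \iland \theta$.
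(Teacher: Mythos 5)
Your proposal is correct and follows essentially the same route as the paper's proof: first show that the padding sentence $\exists \bar w\,(w_0=w_0 \lor w_0\neq w_0)$ is satisfied by \emph{every} team with domain $\bar w$ (the paper exhibits the single witness $\{\epsilon\}[M/\bar w]$ for nonempty teams and $\emptyset$ otherwise, where you unfold this into the quantifier-by-quantifier witness chain), and then resolve the $\iland$ by taking $\scott{\psi}_{\bar w}$ and $X$ as the two witnesses. The only difference is one of care, not of method: the paper compresses your facts (I) and (II) into the single assertion that the left-hand side holds iff $X = \scott{\psi}_{\bar w}\cap Y$ for some $Y$, which tacitly identifies the teams satisfying $\psi$ with $\scott{\psi}_{\bar w}$, whereas your split into the two one-sided inclusions is what is actually needed for general first-order $\psi$.
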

\begin{proof}
This is easily seen by first observing that any team $Y$ with domain $\bar w$
satisfies $\exists
\bar w (w_0=w_0\lor w_0 \neq w_0)$: If $Y$ is empty then $Z=\emptyset$
satisfies $w_0=w_0\lor w_0 \neq w_0$ and $\exists \bar w Z = \exists \bar w
Y= \emptyset$. In the other hand, if $Y$ is non-empty then $Z=\{\epsilon\}[M/\bar w]$ satisfies
$w_0=w_0 \lor w_0 \neq w_0$ and $\exists \bar w Y = \exists \bar w Z = \{\epsilon\}$.

Now, the left hand side of \eqref{eq_literal} holds iff there is $Y$ such that
$X =\scott{\psi}_{\bar w} \cap Y$, which is equivalent to $X \subseteq
\scott{\psi}_{\bar w}$.
\end{proof}

Thus, $X \models_\text{mt} \psi \iland \top_{\bar w}$ iff $X \models_\dep \psi$,
where $\top_{\bar w}$ denotes the sentence $\exists \bar w (w_0=w_0 \lor w_0
\neq w_0)$; and so we have an interpretation of the literals of dependence
logic in mt-logic.

Observe that, in general when $\bar w \subseteq \dom(X)$ we have $X \models
\truth_{\bar w}$ iff  $X= \emptyset$ or $\exists
\bar w X$ is the full team with domain $\dom(X) \setminus \bar w$.

Instead of directly interpreting the dependence atom we turn to the
\emph{independence atom} of \cite{gradel2012dependence}\footnote
 {The independence atom was introduced in \cite{gradel2012dependence} and is
 the embedded version of the multivalued dependence relation used in database
 theory, see
\cite{engstrom2012generalized}.}: $X \models_\dep \bar y \indep_{\bar x} \bar z$
iff $$ \forall s,s'\in X \exists s_0 \in X \bigl( s(\bar x)=s'(\bar x)
\rightarrow s_0(\bar x,\bar z)=s(\bar x,\bar z) \land s_0(\bar x,\bar y) =
s'(\bar x,\bar y)\bigr)$$ We show that this is expressible in mt-logic. 

%We will use the following short-hand:$$\C_{\bar y}X = (\exists \bar y X)[M/\bar y]$$

\begin{prop}\label{prop_indep} 
If $\bar x$, $\bar y$, and $\bar z$ are pair-wise disjoint then
$$X \models_\dep \bar y \indep_{\bar x} \bar z \text{ iff } X\mmodels \forall \bar
w ( \truth_{\bar x,\bar y} \iland \truth_{\bar x,\bar z}),$$ where $\bar w$ is
$\dom(X) \setminus \set{\bar x,\bar y,\bar x}$. 
\end{prop}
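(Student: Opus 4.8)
The plan is to unpack both sides of the claimed equivalence into their underlying assignment-level conditions and match them up. First I would expand the right-hand side using the \enquote{also-part} of Proposition~\ref{lem2}, which tells us that $X \mmodels \forall \bar w (\truth_{\bar x,\bar y} \iland \truth_{\bar x,\bar z})$ holds iff $\exists \bar w X \mmodels \truth_{\bar x,\bar y} \iland \truth_{\bar x,\bar z}$, provided $\bar w = \dom(X) \setminus (\FV \cup \BV)$ of the inner formula. Here the inner formula has free variables among $\bar x,\bar y,\bar z$, so the $\bar w$ in the statement (the variables of $\dom(X)$ outside $\bar x,\bar y,\bar z$) is exactly the right set to quantify away; this step reduces the problem to understanding when $\exists \bar w X$ satisfies the internal conjunction of two $\truth$-formulas.

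Next I would unfold the internal conjunction. By clause~(2) of Definition~\ref{satdefteam}, $X' \mmodels \truth_{\bar x,\bar y} \iland \truth_{\bar x,\bar z}$ (writing $X' = \exists \bar w X$) means there exist $Y,Z$ with $X' = Y \cap Z$, $Y \mmodels \truth_{\bar x,\bar y}$ and $Z \mmodels \truth_{\bar x,\bar z}$. Using the observation recorded just before this proposition --- that $Y \mmodels \truth_{\bar x,\bar y}$ iff $Y$ is empty or $\exists \bar x\bar y\, Y$ is the full team on the remaining variables --- each conjunct says that the projection of the witness team onto the complementary coordinates is full. The key combinatorial content is then that $X'$ is contained in an intersection $Y \cap Z$ where $Y$ is \enquote{full in the $\bar z$-direction over each fixed $\bar x,\bar y$} and $Z$ is \enquote{full in the $\bar y$-direction over each fixed $\bar x,\bar z$}. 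I expect this to translate precisely into a closure property: for the relation $\rel(X')$, whenever two tuples agree on $\bar x$, the tuple recombining the $\bar z$-part of one with the $\bar y$-part of the other also lies in $X'$ --- which is exactly the defining first-order condition for $\bar y \indep_{\bar x} \bar z$.

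The main obstacle, and the step I would treat most carefully, is verifying that the existence of the witnessing teams $Y,Z$ in the internal conjunction is genuinely \emph{equivalent} to the independence condition, not merely implied by it. In one direction, given that $X'$ satisfies the independence atom, I would construct $Y$ and $Z$ explicitly: take $Y$ to be the set of all assignments agreeing with some member of $X'$ on $\bar x,\bar y$ (filling in all values of $\bar z$ and the rest), and $Z$ analogously for $\bar x,\bar z$, then check $Y \cap Z = X'$ using the swap-closure guaranteed by independence. In the other direction, from arbitrary witnesses $Y,Z$ I would need to derive the existence of the recombined tuple $s_0$, which follows because $s \in X' \subseteq Y$ forces all $\bar z$-variants present, $s' \in X' \subseteq Z$ forces all $\bar y$-variants present, and intersecting pins down $s_0 \in Y \cap Z = X'$.

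Throughout I would keep track of the role of $\bar w$: the reduction to $\exists \bar w X$ is harmless because the independence atom $\bar y \indep_{\bar x} \bar z$ does not mention the variables in $\bar w$, so its truth is unaffected by projecting them out (this is the weak-locality phenomenon that Proposition~\ref{lem2} is designed to capture). A minor bookkeeping point is the handling of the empty-team and empty-assignment edge cases, where the $\truth$-formulas are satisfied vacuously and the independence atom holds trivially; I would dispatch these separately at the start so that the main argument can assume $X'$ nonempty. With those checks in place, the chain of equivalences closes and the proposition follows.
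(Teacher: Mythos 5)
Your overall architecture matches the paper's: reduce to $\exists \bar w X$ via the also-part of Proposition \ref{lem2}, unfold the internal conjunction, and, in the direction from the independence atom to the formula, take the canonical cylinders $Y=(\exists\bar z X')[M/\bar z]$ and $Z=(\exists\bar y X')[M/\bar y]$ as witnesses; that direction is fine and is exactly what the paper does. The gap is in the converse direction, at the step ``$s \in X' \subseteq Y$ forces all $\bar z$-variants present.'' The condition $Y \mmodels \truth_{\bar x,\bar y}$ only says (by the observation recorded just before the proposition) that $Y$ is empty or that the \emph{global} projection $\exists \bar x\bar y\, Y$ is the full team on the remaining variables; it does not say that $Y$ is full in the $\bar z$-direction over each fixed $(\bar x,\bar y)$-value, i.e.\ that $Y$ is a $\bar z$-cylinder. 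Your third paragraph silently strengthens the former condition to the latter, and the recombination argument genuinely needs the latter. Concretely, with $M=\set{0,1}$, single variables $x,y,z$, and assignments written as value-triples for $(x,y,z)$, take $X'=\set{(0,0,0),(0,1,1)}$: both $\exists xy\,X'$ and $\exists xz\,X'$ are full, so $Y=Z=X'$ are legitimate witnesses for $\truth_{x,y}\iland\truth_{x,z}$, yet the recombined assignment $(0,1,0)$ demanded by $y\indep_x z$ (applied to the pair $(0,0,0)$, $(0,1,1)$) is not in $X'$. So from \emph{arbitrary} witnesses $Y,Z$ one cannot conclude the independence condition, and your closing claim that ``intersecting pins down $s_0\in Y\cap Z=X'$'' does not go through.

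You should also be aware that this is precisely the thin point of the paper's own proof: equation \eqref{eq1} there asserts, ``by using the fact that $\bar y$ and $\bar z$ are disjoint,'' that $X\mmodels\truth_{\bar x,\bar y}\iland\truth_{\bar x,\bar z}$ iff $X=X_{\bar z}\cap X_{\bar y}$ with $X_{\bar z}=(\exists\bar z X)[M/\bar z]$, and the left-to-right half of that biconditional is exactly the normalization of arbitrary witnesses to the canonical cylinders that your sketch is missing; the example above puts the same pressure on it. The rest of the paper's argument --- the equivalence of $X=X_{\bar z}\cap X_{\bar y}$ with the independence atom --- is correct and coincides with your intended recombination step. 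To complete a proof along these lines you would therefore need either an argument that the witnesses for the internal conjunction can always be replaced by $X_{\bar z}$ and $X_{\bar y}$ (which the example indicates is not available for $\truth$ as defined), or a strengthening of the formula so that the teams satisfying the conjuncts really are cylinders in the appropriate directions.
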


\begin{proof}
Observe first that, by Proposition \ref{lem2}: $$X \mmodels \forall \bar w(
\truth_{\bar x,\bar y} \iland \truth_{\bar x,\bar z})  \text{ iff } \exists \bar w X \mmodels 
\truth_{\bar x,\bar y} \iland \truth_{\bar x,\bar z}.$$ 
Therefore we may, without loss of
generality, assume that $\dom(X) = \set{\bar x,\bar y,\bar z}$.

By using the fact that $\bar y$ and $\bar z$ are disjoint we get that $X
\mmodels \truth_{\bar x,\bar y} \iland \truth_{\bar x,\bar z}$ iff
\begin{equation}\label{eq1}
	X = X_{\bar z} \cap X_{\bar y},
\end{equation}
where $X_{\bar z} = (\exists \bar z X)[M/\bar z]$ and similar for $X_{\bar y}$.

Now, assume that $X \models \bar y \indep_{\bar x} \bar z $ and prove  \eqref{eq1}. The left-to-right
inclusion is trivial so let us assume that $s_1 \in X_{\bar z} \cap X_{\bar y}$, 
i.e., that there are $s$ and $s'$ such that $s(\bar x,\bar y) = s_1(\bar x,\bar
y)$ and $s'(\bar x,\bar z) = s_1(\bar x,\bar z)$. This tells us that there is $s_0
\in X$ such that $s_0=s_1$, and thus that $s_1 \in X$.

On the other hand, assume \eqref{eq1} and that $s,s' \in X$ such that $s
(\bar x)=s'(\bar x)$. Define $s_0$ to be such that $s_0(\bar x,\bar z)=s
(\bar x,\bar z)$ and $s_0(\bar x,\bar y) = s'(\bar x,\bar y)$. From these two
equations it follows that $s_0 \in X_{\bar z} $ and  $s_0 \in X_{\bar y} $
and thus, from \eqref{eq1}, we get that $s_0 \in X$. Thus,
$X \models_\dep \bar y \indep_{\bar x} \bar z$.
\end{proof}

In fact, disjoint independence atoms can express any independence atom by
existentially quantifying in new variables. As can be easily checked, $D
(\bar x,y)$ is equivalent to $y \indep_{\bar x} y$ which in Dependence logic
is equivalent to $\exists z (y \indep_{\bar x} z \land y=z)$. Thus, we may
express dependence atoms in mt-logic as follows.

\begin{prop}\label{prop_dep_atom}
$X \models_\dep \depat(\bar x,y)$ iff $$X \models_\mt \exists z \bigl(\forall \bar w (
\truth_{\bar x, y} \iland \truth_{\bar x, z})  \land (y=z \iland \truth_
 {\bar x,\bar w})\bigr),$$where $z$ is not in $\bar x,y$ and $\bar w$ is
 $\dom(X)\setminus \set{\bar x,y,z}$.
\end{prop}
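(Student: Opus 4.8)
The plan is to read the displayed mt-formula as the literal translation of the dependence-logic sentence $\exists z\,(y \indep_{\bar x} z \iland y=z)$, which by the remark preceding the proposition is equivalent over dependence logic to $\depat(\bar x,y)$, and then to verify that each syntactic constituent is translated faithfully. The inner conjunct $\forall \bar w(\truth_{\bar x,y} \iland \truth_{\bar x,z})$ is handled directly by Proposition \ref{prop_indep}: for any team $Y$ extending $X$ by the fresh variable $z$ one has $Y \mmodels \forall \bar w(\truth_{\bar x,y} \iland \truth_{\bar x,z})$ iff $Y \models_\dep y \indep_{\bar x} z$ (note $\dom(Y)\setminus\set{\bar x,y,z}=\bar w$, so the hypotheses of that proposition are met). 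The conjunct $(y=z) \iland \truth_{\bar x,\bar w}$ is intended to capture the dependence-logic meaning of the atom $y=z$, namely $Y \subseteq \scott{y=z}_{\dom(Y)}$; and the external $\land$ together with $\exists z$ should match the dependence-logic connective $\iland$ (which by Proposition \ref{ordsat}(2) also means ``both conjuncts hold'') and the dependence-logic quantifier.

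First I would pin down the witnessing team. Since $z \notin \dom(X)$ we have $\exists z X = X$, so $X \mmodels \exists z(\cdots)$ holds iff there is $Y$ with $\dom(Y)=\dom(X) \cup \set{z}$, $\exists z Y = X$, satisfying both conjuncts. Writing the second conjunct as $Y = P \cap Q$ with $P \mmodels y=z$ and $Q \mmodels \truth_{\bar x,\bar w}$, the literal clause of Definition \ref{satdefteam} forces $P = \scott{y=z}_{\dom(Y)}$, whence $Y \subseteq \scott{y=z}_{\dom(Y)}$; that is, $z$ agrees with $y$ on every assignment of $Y$. Combined with $\exists z Y = X$ this determines $Y$ uniquely as $\set{s[s(y)/z] | s \in X}$, i.e.\ the set-valued mt-existential collapses to the dependence-logic function $f\colon s \mapsto s(y)$ of Proposition \ref{ordsat}(3). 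On this $Y$ the variables $y$ and $z$ are identified, so Proposition \ref{prop_indep} yields $Y \mmodels \forall \bar w(\truth_{\bar x,y}\iland \truth_{\bar x,z})$ iff $Y \models_\dep y \indep_{\bar x} z$ iff $X \models_\dep y \indep_{\bar x} y$, which is $\depat(\bar x,y)$.

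Assembling the two directions is then routine: if $X \models_\dep \depat(\bar x,y)$ the canonical team $Y=\set{s[s(y)/z] | s \in X}$ witnesses the mt-formula, and conversely any mt-witness is forced into this shape and yields $\depat(\bar x,y)$. The auxiliary variables $\bar w = \dom(X)\setminus\set{\bar x,y}$ are absorbed by the $\forall \bar w$ and the $\truth$-subscripts and can be handled uniformly using the locality statement of Proposition \ref{lem2}. The step I expect to be the main obstacle is verifying that $(y=z)\iland\truth_{\bar x,\bar w}$ evaluates to \emph{exactly} $Y \subseteq \scott{y=z}_{\dom(Y)}$ and nothing stronger: because the literal $y=z$ contributes the full diagonal team $P$, the complementary team $Q$ must both meet $P$ in $Y$ and satisfy the $\truth$-clause, and one must check that $Q$ can always be completed with off-diagonal assignments so that the $\truth$-clause imposes no spurious surjectivity requirement on the $y$-column of $Y$. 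This is precisely the point where the set of variables hidden in the $\truth$-subscript must be chosen with care, and it is where I would look most closely when comparing with the author's argument.
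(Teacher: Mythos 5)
Your plan follows the paper's own argument almost step for step: the witness is forced to be $Y=\set{s[s(y)/z] | s\in X}$, the first conjunct is discharged by Proposition \ref{prop_indep} together with the equivalence of $\depat(\bar x,y)$ with $y\indep_{\bar x}y$, and the computation identifying $y$ with $z$ and extracting $\depat(\bar x,y)$ from the independence atom is exactly the one in the paper. The single step you defer --- that $(y=z)\iland\truth_{\bar x,\bar w}$ evaluates to exactly $Y\subseteq\scott{y=z}_{\dom(Y)}$ and nothing stronger --- is, however, precisely where the argument breaks, and your suspicion about a spurious surjectivity requirement is justified. With the subscript as printed, $\truth_{\bar x,\bar w}$ omits $y$ and $z$, so by the remark following \eqref{eq_literal} a nonempty witness $Q$ with $Y=P\cap Q$ must have $\exists\bar x\,\exists\bar w\,Q$ equal to the full team on $\set{y,z}$, i.e.\ $Q(y,z)=M^2$. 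The off-diagonal pairs can indeed be supplied by assignments of $Q$ outside $\scott{y=z}$, but each diagonal pair $(a,a)$ can only be realized by an assignment lying in $\scott{y=z}\cap Q=Y$; hence the second conjunct forces $Y(y)=M$, equivalently $X(y)=M$. This is not implied by $X\models_\dep\depat(\bar x,y)$ (take $X$ a single assignment over a two-element universe), so the left-to-right direction fails for the formula as literally written, and the verification you postponed cannot be completed.

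The paper's proof passes over this by citing \eqref{eq_literal}, which applies only when the $\truth$-subscript is the \emph{entire} domain of the team being tested; under that reading (i.e.\ with $\truth_{\bar x,y,z,\bar w}$ in the second conjunct) the conjunct is exactly $Y\subseteq\scott{y=z}_{\dom(Y)}$ and the rest of your argument goes through verbatim and coincides with the author's. So: same route as the paper and the right instinct about where the danger lies, but a complete proof must either correct the subscript or otherwise justify the full-domain reading --- as it stands, the deferred step is a genuine gap, and in fact a counterexample to the displayed formula.
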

\begin{proof} Assume that $X \models_\dep D(\bar x,y)$ and define $Y=X[f/z]$ where
 $f(s)=s(y)$. According to Proposition \ref{prop_indep} $Y$ satisfies the
 first conjunct of the formula and according to \eqref{eq_literal} it
 satisfies the second conjunct.

On the other hand, assume there is $Y$ such that $X= \exists z Y$ and $Y$
satisfies the conjunction. $Y$ satisfies the second conjunct iff $s(z)=s
(y)$ for all $s \in Y$. Satisfying the first conjunct implies that for all
$s,s' \in Y$ if $s(\bar x) = s'(\bar x)$ then there is $s_0 \in Y$ such that
$s_0(\bar x,y)=s(\bar x,y)$ and $s_0(z) = s'(z)$, i.e., $s(y) = s_0(y)=s_0
(z)=s'(z) = s'(y)$. In other words, $X \models_\dep D(\bar x,y)$.
\end{proof}

% \begin{lemma} Assume $\M$ has at least two elements, then for any team $X$:
% 	$\M,X \mmodels \exists v (D(\bar x,v)\land D(\bar x,v))$, where $\dom(X) = \set{\bar x}$, and $v
% 	\notin \dom(X)$. 
% \end{lemma}
% \begin{proof} 
% 	Let $b\neq c \in \M$.
% 	Define $f,g: M^k \to M$, where $\bar x = x_1,\ldots,x_k$ such that $f(\bar a) = b$ for all $\bar
% 	a \in M^k$ and $g(\bar a) = b$ whenever $\set{\bar x \mapsto \bar a} \in X$ and $g(\bar a)= c$
% 	otherwise. Let $Y = \set{s: \set{\bar x,v} \to M| s(v) = f(s(\bar x))}$ and $Z =  \set{s:
% 	\set{\bar x,v} \to M| s(v) = g(s(\bar x))}$. Then $Y \cap Z = X[f'/v]$, where $f' = f \rest X$.

% 	Now $\exists v (X[f'/v]) = X$ and so $X \mmodels \exists v (D(\bar x,v)\land D(\bar x,v))$.
% \end{proof}

% Let $\truth_{\bar x;v}$ be the formula $\exists v (D(\bar x,v)\land D(\bar x,v))$. We will in general assume that $v \notin \bar x$.
% Observe that when $|\M| = 1$, $\emptyset_{\bar x} \not\mmodels \truth_{\bar x;v}$. 

We are now able to define an \emph{almost} compositional translation ${}^+:
\varphi \mapsto \varphi^+$ of dependence logic into our logic in such a way
 that $$ \M,X \models_\dep \varphi \text{ iff } \M,X \mmodels \varphi^+,$$
 for all models $\M$ and teams $X$ with $\dom(X) = \FV(\varphi)$. This is
 done by replacing atomic formulas $\psi$ by $\psi \iland \truth_{\bar x}$
 for some suitable choice of variables $\bar x$ and using Proposition \ref
 {prop_dep_atom}.

\begin{defin} Let $f(\bar w,\varphi)$ be defined inductively on the set of
 dependence logic formulas: 
\begin{itemize} 
\item $f(\bar w,D(\bar x,y)) = \exists z (\forall \bar w' (\truth_{\bar x, y} \iland \truth_{\bar x, z})  \land (y=z \iland \truth_{\bar x,\bar w'})$, where $\bar w' = \bar w\setminus \set{\bar x,y,z}$ and $z$ is not in $\bar w$.
\item $f(\bar w,\varphi) = \varphi \iland \truth_{\bar w}$ if $\psi$ is a literal,
\item $f(\bar w,\varphi \land \psi) = f(\bar w,\varphi) \iland f(\bar w,\varphi)$, 
\item $f(\bar w,\varphi \lor \psi) = f(\bar w,\varphi) \ilor f(\bar w,\varphi)$, and
\item $f(\bar w,\exists y \varphi) = \exists y f(\bar w,y,\varphi)$, and 
\item $f(\bar w,\forall y \varphi) = \forall y f(\bar w,y,\varphi)$.
\end{itemize}
Let $\varphi^+$ be the formula $f(\FV(\varphi),\varphi)$.
\end{defin}

The translation $\varphi \mapsto \varphi^+$ is not compositional since for
example $(x=x \land y=y)^+$ is not $(x=x)^+ \iland (y=y)^+$, and these two
formulas are not equivalent. 

\begin{prop}
	For every team $X$ and dependence logic formula $\varphi$ such that
$\dom(X)=\FV(\varphi)$: $$ X \models_\dep \varphi \text{ iff } X \mmodels
\varphi^+.$$
\end{prop}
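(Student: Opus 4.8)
The plan is to prove, by induction on $\varphi$, the stronger statement that for every dependence logic formula $\varphi$, every finite set of variables $\bar w \supseteq \FV(\varphi)$ (with the auxiliary variables $z$ demanded by the dependence-atom clause of $f$ always chosen outside $\bar w$), and every team $X$ with $\dom(X) = \bar w$, we have $X \models_\dep \varphi$ iff $X \mmodels f(\bar w,\varphi)$. The original proposition is then the special case $\bar w = \FV(\varphi)$, since $\varphi^+ = f(\FV(\varphi),\varphi)$. The reason I cannot induct directly on the unparametrized statement is that the translation is not compositional: passing under a quantifier $\exists y$ replaces $f(\bar w,\cdot)$ by $f(\bar w \cup \set{y},\cdot)$, so the domain context $\bar w$ must be carried through the induction as a parameter. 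I will also assume, harmlessly by $\alpha$-renaming (which preserves $\models_\dep$), that $\varphi$ has no variable occurring both free and bound and that its bound variables are pairwise distinct; this guarantees that at every quantifier step the newly bound variable $y$ satisfies $y \notin \bar w$.

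For the base cases, a literal $\psi$ is handled by \eqref{eq_literal}: since $f(\bar w,\psi) = \psi \iland \truth_{\bar w}$, that equivalence gives $X \mmodels \psi \iland \truth_{\bar w}$ iff $X \subseteq \scott{\psi}_{\bar w}$ iff $X \models_\dep \psi$, because a team satisfies a literal in dependence logic exactly when it is contained in the literal's denotation. The dependence atom $D(\bar x,y)$ is handled verbatim by Proposition \ref{prop_dep_atom}, whose hypotheses ($z$ fresh, $\bar w' = \dom(X) \setminus \set{\bar x,y,z}$) match the clause defining $f(\bar w, D(\bar x,y))$.

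The inductive steps for $\ilor$ and for both quantifiers are essentially bookkeeping once the induction hypothesis is in place. For $\ilor$, both logics define the connective through the same splitting $X = Y \cup Z$, where necessarily $\dom(Y) = \dom(Z) = \bar w$, so applying the induction hypothesis to $Y$ and $Z$ term by term yields the equivalence. For $\exists y$ and $\forall y$, the mt-clauses \eqref{exteam} and \eqref{allteam} quantify over a witness team $Y$ with $y \in \dom(Y)$ and $\exists y Y = \exists y X$; since $y \notin \bar w = \dom(X)$ we have $\exists y X = X$ and hence $\dom(Y) = \bar w \cup \set{y}$, exactly the context in which the induction hypothesis for $f(\bar w \cup \set{y},\varphi)$ applies, matching clauses \eqref{ex} and \eqref{all} of the dependence-logic definition witness for witness.

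The genuinely delicate case is internal conjunction $\iland$, and this is where I expect the main obstacle. In mt-logic $X \mmodels \alpha \iland \beta$ means there exist $Y,Z$ with $X = Y \cap Z$, $Y \mmodels \alpha$ and $Z \mmodels \beta$; since mt-logic is \emph{not} closed under subteams in general, I cannot directly conclude $X \mmodels \alpha$ and $X \mmodels \beta$ from $X \subseteq Y,Z$. The way around this is to route the argument through the induction hypothesis rather than through mt-logic directly: from $Y \mmodels f(\bar w,\varphi)$ the induction hypothesis gives $Y \models_\dep \varphi$, and since dependence logic \emph{is} downward closed (Proposition \ref{ordsat}(1)), $X \subseteq Y$ yields $X \models_\dep \varphi$, and symmetrically $X \models_\dep \psi$; then Proposition \ref{ordsat}(2) gives $X \models_\dep \varphi \iland \psi$. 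The converse is immediate: if $X \models_\dep \varphi \iland \psi$ then $X \models_\dep \varphi$ and $X \models_\dep \psi$ by Proposition \ref{ordsat}(2), the induction hypothesis gives $X \mmodels f(\bar w,\varphi)$ and $X \mmodels f(\bar w,\psi)$, and the trivial split $X = X \cap X$ witnesses $X \mmodels f(\bar w,\varphi) \iland f(\bar w,\psi)$. Thus the asymmetry between the two logics---mt-logic failing subteam closure while dependence logic enjoys it---is exactly what makes the biconditional the right object to induct on.
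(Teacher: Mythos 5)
Your proof is correct and follows essentially the same route as the paper's: strengthen the statement so that the domain parameter $\bar w = \dom(X) \supseteq \FV(\varphi)$ is carried through the induction, dispatch the two base cases via \eqref{eq_literal} and Proposition \ref{prop_dep_atom}, and match the remaining clauses witness for witness. The one point where you diverge---treating $\iland$ as delicate and routing through downward closure of $\models_\dep$ via Proposition \ref{ordsat}---is valid but unnecessary, since Definition \ref{satdef} already gives dependence-logic conjunction the same intersection-splitting clause as mt-logic (equivalent to the usual clause by Proposition \ref{ordsat}), so that case is also a literal clause-for-clause correspondence.
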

\begin{proof}
Observe first that the role of $\bar w$ in $f(\bar w,\varphi)$ is to ``keep
track'' of $\dom(X)$; this has to be cared for in the induction step and thus
we will instead prove the slightly more involved statement that if
$\FV(\varphi) \subseteq \dom(X)$ then $$ X \models_\dep \varphi \text{ iff }
X \mmodels f(\dom(X),\varphi).$$ The two base cases are handled by
\eqref{eq_literal} and Proposition \ref{prop_dep_atom}. For the inductive
steps observe that the satisfaction clauses for $\ilor$ and $\iland$
correspond exactly to the satisfaction clauses in dependence logic for $\land$
and $\lor$. Similarly  with the two quantifier cases.
\end{proof}

%\begin{prop}
%For every formula $\varphi$ and team $X$, if $X \mmodels \varphi$ then $X \models
% 			\varphi$.
%\end{prop}

We may, in a similar fashion, give a translation $g$ of independence logic
into team logic in such a way that 
$$ X \models_\dep \varphi \text{ iff } X \mmodels g(\varphi,\dom(X)).$$ 
Note that this translation need to take care
of the non-disjoint independence atom by translating it into disjoint
independence atom: In independence logic we have that $$ \bar y \indep_
{\bar x} \bar z \text{ is equivalent to } \exists \bar v,\bar w (\bar v=\bar
y \land \bar w = \bar z \land \bar v \indep_{\bar x} \bar w).$$

Galliano in \cite{galliani2012inclusion} proved that every ESO-property can be
expressed by an independence formula and thus team logic has the same
expressive power:
\begin{thm} The expressive power of team logic is that of existential
 second-order logic, for both formulas and sentences.
\end{thm}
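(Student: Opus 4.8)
The plan is to prove the two inclusions separately at the level of formulas, tie them together through the relational encoding $X \mapsto \rel(X)$, and then read off the sentence case. One direction is already in hand: the proposition producing, for each mt-logic formula $\varphi$ with $n$ free variables, a $\Sigma_1^1$ formula $\Theta$ over $\tau \cup \set{R}$ with $\M, X \mmodels \varphi$ iff $(\M, \rel(X)) \models \Theta$, says exactly that every team property definable in mt-logic is $\Sigma_1^1$-definable under the encoding $\rel$. I would simply invoke it for the passage from mt-logic to $\ESO$.

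For the reverse inclusion I would compose two translations. By Galliani's theorem \cite{galliani2012inclusion}, every $\Sigma_1^1$ property is definable by an independence-logic formula $\varphi$, i.e.\ $(\M, R) \models \Theta$ iff $\M, X \models_\dep \varphi$ whenever $R = \rel(X)$, where here $\models_\dep$ denotes satisfaction in independence logic as used in the definition of $g$. The translation $g$ described above then turns $\varphi$ into an mt-logic formula with $\M, X \models_\dep \varphi$ iff $\M, X \mmodels g(\varphi, \dom(X))$. Chaining the two equivalences shows that every $\Sigma_1^1$ property is mt-logic-definable, again under the encoding $\rel$. Since every $n$-ary relation $R$ equals $\rel(X)$ for the team whose assignments enumerate the tuples of $R$, the two inclusions are stated relative to the same encoding and therefore match exactly, so mt-logic and $\ESO$ define the same team properties.

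For sentences I would specialise to $n = 0$. Given an mt-logic sentence $\sigma$, the proposition above yields a $\Sigma_1^1$ formula $\Theta$ whose relation symbol $R$ is $0$-ary; since $\M \models \sigma$ abbreviates $\M, \set{\epsilon} \mmodels \sigma$ and $\rel(\set{\epsilon})$ is the $0$-ary relation that holds, substituting this fixed interpretation of $R$ into $\Theta$ gives a $\Sigma_1^1$ $\tau$-sentence $\Theta_0$ with $\M \models \sigma$ iff $\M \models \Theta_0$. Conversely, a $\Sigma_1^1$ $\tau$-sentence has no free first-order variable, so its independence-logic translation and the resulting $g$-translation are again sentences, and the chain of the previous paragraph produces an mt-logic sentence defining the same class of structures.

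The genuinely hard content is outsourced to Galliani's theorem, so what remains is bookkeeping. The step I would watch most carefully is that both translations are stated relative to the \emph{same} coding of teams as relations, so that the two inclusions compose exactly; and the empty-domain conventions—the status of $\set{\epsilon}$ against the empty team, and the truth value of the $0$-ary $R$—must be handled consistently when moving from the formula statement to the sentence statement.
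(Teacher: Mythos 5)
Your proposal is correct and follows essentially the same route as the paper: the forward inclusion is the stated proposition translating mt-logic into $\Sigma_1^1$ over $\tau \cup \set{R}$, and the reverse inclusion composes Galliani's result that independence logic captures all $\ESO$ properties with the translation $g$ of independence logic into mt-logic. The paper leaves the composition and the sentence case implicit, so your extra bookkeeping about the shared encoding $\rel$ and the $n=0$ specialisation only makes explicit what the paper takes for granted.
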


\section{Generalized quantifiers}

According to Mostowski \cite{Mostowski:1957} and Lindström \cite
{Lindstrom:1966} a \emph{generalized quantifier} is a class (in most cases
a \emph{proper class}) of structures in some finite relational signature
closed under taking isomorphic images. For example $$\text{\tt most} = \set{
(M,A,B) : |A \cap B| \geq |A \setminus B|}$$ is a generalized quantifier.The
truth condition is defined so that $$
\M,s \models \mathtt{most}\, x,y \ (\varphi(x), \psi(y)) \ \text{ iff } \
 (\M,\scott{\varphi}^s_x,\scott{\psi}^s_y) \in \mathtt{most},$$
where $$\scott{\varphi}^s_x = \set{a \in \M | \M,s[a/x] \models \varphi}.$$

Thus, 
$$
\M,s \models \mathtt{most}\, x,y \ (\varphi(x), \psi(y))
 \text{ iff } \ |\scott{\varphi}^s_x \cap \scott{\psi}^s_y| \geq |\scott{\varphi}^s_x \setminus \scott{\psi}^s_y|,
$$
which coincides with the intuitive truth condition that most $\varphi$'s are $\psi$'s.

Given a generalized quantifier $Q$ and a domain $M$, let the \emph
{local} quantifer $Q_M$ be defined as $$Q_M = \set
{\langle A_0,A_1,\ldots,A_k \rangle | (M,A_0,A_1,\ldots,A_k) \in Q}.$$

% Observe that local quantifiers are just sets of relations over the domain $M$, they are not generalized quantifiers in the strict sense. Generalized quantifiers in the strict sense we sometimes call \emph{global} when need is to distinguish them from \emph{local} quantifiers. 

We say that a generalized quantifier is of \emph{type} $\langle
n_1,\ldots,n_k\rangle$ if it is a class of structures in the relational
signature $\set{R_1,\ldots,R_k}$ where $R_i$ is of arity $n_i$. 

Given a generalized quantifier $Q$ of type $\langle n \rangle$ we may
extend mt-logic with it in such a way that (2) in
Lemma \ref{lem1} is holds for all $\text{FO(Q)}$ formulas. This is done
by the following definition:
\begin{defin} \label{gq}
	Let $Q$ be of type $\langle n\rangle$ then 
	$\M, X \mmodels Q\bar x \varphi$
	iff
  there is $Y$ such that $\bar x \in \dom(Y)$, $\M,Y\mmodels \varphi$ and  
	$\exists \bar x X = Q\bar x Y$,
	where $$Q\bar x Y = \set{s :\dom(Y) \setminus \set{\bar x} \to M | Y_s(\bar x) \in Q_M}.$$
\end{defin} Remember that $Y_s = \set{s': \dom(Y) \setminus \dom(s) \to M |
 s\cup s' \in Y}$. Observe that the clauses for $\forall$ and $\exists$ in
 Definition \ref{satdef} are special cases of the above definition.

Similar to the cases of $\exists$ and $\forall$ there is an alternative truth condition for generalized quantifiers $Q$ using arity increasing operations:

\begin{lemma} $X \mmodels Q \bar x \varphi$ iff there exists a function $$F:
 \set{\epsilon}[M/\dom(X) \setminus \set{\bar x}] \to \power(M^k)$$ such that $F
 (s) \in Q_M$ iff $s \in \exists \bar x X$ and $(\exists \bar x X)
 [F/\bar x] \mmodels \varphi$.
\end{lemma}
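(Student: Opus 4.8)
The plan is to prove both implications by unwinding Definition~\ref{gq} and passing between a witnessing team $Y$ and the function $F$ via the column map $s \mapsto Y_s(\bar x)$. Throughout write $V = \dom(X)\setminus\set{\bar x}$ and let $U = \set{\epsilon}[M/V]$ be the full team on $V$, so that $\exists\bar x X \subseteq U$ and $F\colon U \to \power(M^k)$. Abbreviate the two requirements on $F$ as (a) ``$F(s)\in Q_M$ iff $s\in\exists\bar x X$'' (for every $s\in U$) and (b) ``$(\exists\bar x X)[F/\bar x]\mmodels\varphi$''. The guiding idea is that a team $Y$ with $\dom(Y)=V\cup\set{\bar x}$ is completely determined by its columns $Y_s(\bar x)$, and that condition~(a) is meant to encode the requirement $Q\bar x Y=\exists\bar x X$ of Definition~\ref{gq} column by column.

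For the forward direction, assume $X\mmodels Q\bar x\varphi$ and fix a witness $Y$ as in Definition~\ref{gq}: $\bar x\subseteq\dom(Y)$, $Y\mmodels\varphi$ and $\exists\bar x X=Q\bar x Y$. Define $F(s)=Y_s(\bar x)$ for $s\in U$. Then for each $s$ we have $F(s)\in Q_M$ iff $Y_s(\bar x)\in Q_M$ iff $s\in Q\bar x Y=\exists\bar x X$, which is precisely~(a). The remaining task is to verify~(b). Unwinding the definition of $[F/\bar x]$ shows that $(\exists\bar x X)[F/\bar x]$ is exactly the subteam $\set{t\in Y | t\rest V\in\exists\bar x X}$ of $Y$, obtained by discarding every column sitting over an $s\notin\exists\bar x X$; so~(b) asks that this subteam still satisfies $\varphi$.

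For the backward direction, assume $F$ with~(a) and~(b) is given and set $Y=(\exists\bar x X)[F/\bar x]$. Then $Y\mmodels\varphi$ is just~(b) and $\bar x\subseteq\dom(Y)$ by construction, so by Definition~\ref{gq} it suffices to check $Q\bar x Y=\exists\bar x X$. For $s\in\exists\bar x X$ the column is $Y_s(\bar x)=F(s)\in Q_M$ by~(a), giving $\exists\bar x X\subseteq Q\bar x Y$. For the reverse inclusion note that if $s\notin\exists\bar x X$ then no assignment of $Y$ lies over $s$, so $Y_s(\bar x)=\emptyset$; to conclude $s\notin Q\bar x Y$ one needs $\emptyset\notin Q_M$.

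The main obstacle is the closing step of each direction, and both expose the same feature of the right-hand side: the operation $(\exists\bar x X)[F/\bar x]$ keeps only the columns over $\exists\bar x X$ and deletes those over $U\setminus\exists\bar x X$. In the backward direction this forces the side condition $\emptyset\notin Q_M$; in the forward direction it produces the sharper demand that the subteam $\set{t\in Y | t\rest V\in\exists\bar x X}$ of a $\varphi$-team again satisfy $\varphi$. Since mt-logic is not closed under subteams, this does not follow from $Y\mmodels\varphi$ for free, and these are precisely the columns that the $\forall$-clause of the arity-increasing proposition above takes care to retain through its extra summand $(\exists x X)^c[F/x]$. I therefore expect this step to be the crux: making it go through will require either restricting to quantifiers with $\emptyset\notin Q_M$ together with an argument that $\varphi$-satisfaction survives the passage to the $\exists\bar x X$-columns, or else identifying which witness $Y$ to use so that $(\exists\bar x X)[F/\bar x]$ reconstructs it exactly rather than a proper subteam.
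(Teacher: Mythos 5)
Your diagnosis is correct, and it can be sharpened: the lemma as printed is actually \emph{false}, and the step you isolate as the crux is exactly where it fails. Concretely, let $Q$ be the quantifier with $Q_M=\set{A\subseteq M : |A|\geq 2}$ (so $\emptyset\notin Q_M$), let $\varphi$ be the literal $x=c$ for a constant $c$, and let $X=\emptyset_{\set{x}}$, so that $\exists x X=\emptyset_\emptyset$. The team $Y=\set{x\mapsto c^{\M}}$ witnesses $X\mmodels Qx\,\varphi$ by Definition~\ref{gq}: $Y\mmodels x=c$ (it is exactly the solution team) and $QxY=\emptyset=\exists x X$ since $Y_\epsilon(x)=\set{c^{\M}}\notin Q_M$. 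But the right-hand side of the lemma demands $(\exists x X)[F/x]=\emptyset_{\set{x}}\mmodels x=c$, which fails in every structure. Note that here $\emptyset\notin Q_M$, so neither of the two repairs you contemplate in your closing paragraph can rescue the literal statement: the columns of $Y$ sitting over $(\exists\bar x X)^c$ carry genuine content (they must lie \emph{outside} $Q_M$ but need not be empty), and $(\exists\bar x X)[F/\bar x]$ silently discards them.

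The paper's entire proof is the one line ``directly from the definitions,'' which only makes sense under the reading that the team in the last clause is the full lift $\bigl(\set{\epsilon}[M/\dom(X)\setminus\set{\bar x}]\bigr)[F/\bar x]$ rather than $(\exists\bar x X)[F/\bar x]$; that is, the displayed team is best regarded as a typo (compare the $\forall$-clause of the earlier arity-increasing proposition, whose extra summand $(\exists xX)^c[F/x]$ you correctly point to, and note also that $\power(M^k)$ should read $\power(M^n)$ for $Q$ of type $\langle n\rangle$). Under that corrected reading, your own machinery finishes the proof with no side conditions: writing $U=\set{\epsilon}[M/\dom(X)\setminus\set{\bar x}]$, the maps $F\mapsto U[F/\bar x]$ and $Y\mapsto(s\mapsto Y_s(\bar x))$ are mutually inverse between functions $F\colon U\to\power(M^n)$ and teams $Y$ with domain $(\dom(X)\setminus\set{\bar x})\cup\set{\bar x}$; your condition~(a) translates verbatim into $Q\bar x Y=\exists\bar x X$, and condition~(b) becomes $Y\mmodels\varphi$ on the nose rather than for a proper subteam. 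Both directions are then immediate, which is evidently what the paper intends. So your forward-direction computation of~(a) already is the substance of the paper's argument; what you flagged as a gap in your attempt is in fact an error in the statement, and your second proposed fix---choosing $Y$ so that the lift reconstructs it exactly---is the right one once the lift is taken over the full team $U$.
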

\begin{proof}
Directly from the definitions.
\end{proof}

\begin{prop}\label{prop:cons} For every untangled $\varphi$  formula of $FO(Q)$ and every team
 $X$ such that $\dom(X) \cap \BV(\varphi) = \emptyset$:
 $$\M,X \mmodels \varphi\text{ iff } X = \scott{\varphi}^{\M}_{\dom(X)}.$$
\end{prop}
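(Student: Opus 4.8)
The plan is to prove the equivalence by induction on the structure of the untangled $FO(Q)$ formula $\varphi$, extending the argument of Proposition \ref{lem1} to the new quantifier. The base case of literals and the inductive steps for $\iland$ and $\ilor$ are handled exactly as in Proposition \ref{lem1}: those arguments only invoke the induction hypothesis, which in the present setting asserts the denotation identity $Y = \scott{\psi}^{\M}_{\dom(Y)}$ for the relevant $FO(Q)$ subformulas $\psi$, and are insensitive to whether $Q$ occurs inside them. Since, as remarked after Definition \ref{gq}, $\exists$ and $\forall$ are themselves special cases of a generalized quantifier, the whole new content of the proof is concentrated in the single case $\varphi = Q \bar x \psi$.

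For that case the key is to establish, for an arbitrary type $\langle n\rangle$ quantifier $Q$, the analogue of the operational lemma already relating $\exists x$ and $\forall x$ to their denotations. Concretely, I would first show that for every $FO(Q)$ formula $\psi$ with $\FV(\psi)\subseteq \bar y \cup \set{\bar x}$ and $\bar x \cap \bar y = \emptyset$,
$$Q \bar x \scott{\psi}^{\M}_{\bar y \cup \set{\bar x}} = \scott{Q \bar x \psi}^{\M}_{\bar y}.$$
This is a direct unfolding of the definitions: for $s : \bar y \to M$ the fibre $(\scott{\psi}^{\M}_{\bar y \cup \set{\bar x}})_s(\bar x)$ is exactly the local denotation $\set{\bar a \in M^n : \M, s[\bar a/\bar x] \models \psi}$, so membership of $s$ in the left-hand side, namely that this fibre lies in $Q_M$, is precisely the Tarskian truth condition $\M, s \models Q \bar x \psi$ that defines the right-hand side.

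With this lemma the $Q$-case mirrors the $\exists$-case of Proposition \ref{lem1}. Since $\bar x \subseteq \BV(\varphi)$ and $\dom(X) \cap \BV(\varphi) = \emptyset$ we get $\bar x \cap \dom(X) = \emptyset$, hence $\exists \bar x X = X$; and since $\varphi$ is untangled, $\bar x \cap \BV(\psi) = \emptyset$. Together with $\FV(\varphi) \subseteq \dom(X)$ this yields $\FV(\psi) \subseteq \dom(X) \cup \set{\bar x}$ and $(\dom(X)\cup\set{\bar x}) \cap \BV(\psi) = \emptyset$, so the induction hypothesis applies to $\psi$ with any team of domain $\dom(X)\cup\set{\bar x}$. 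By Definition \ref{gq}, $X \mmodels Q\bar x\psi$ iff there is a $Y$ with $\set{\bar x}\subseteq\dom(Y)$, $Y \mmodels \psi$ and $Q\bar x Y = \exists\bar x X = X$; the domain equation forces $\dom(Y) = \dom(X) \cup \set{\bar x}$, so by the induction hypothesis the only candidate is $Y = \scott{\psi}^{\M}_{\dom(X)\cup\set{\bar x}}$. Thus $X \mmodels Q\bar x\psi$ iff $X = Q\bar x \scott{\psi}^{\M}_{\dom(X)\cup\set{\bar x}}$, which by the lemma equals $\scott{Q\bar x\psi}^{\M}_{\dom(X)}$, as required.

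I expect the main obstacle to be the domain bookkeeping that makes the induction hypothesis applicable. One must use untangledness to guarantee both that $\bar x$ is disjoint from $\dom(X)$ (so that $\exists\bar x X = X$ and the fibres over $\bar x$ are genuine $n$-tuples outside the ambient domain) and that $\bar x$ is not re-bound inside $\psi$ (so that $(\dom(X)\cup\set{\bar x})\cap\BV(\psi)=\emptyset$). Verifying the operational lemma itself is routine once the fibre $Y_s(\bar x)$ is identified with the local denotation of $\psi$, so the conceptual weight lies entirely in checking that these disjointness conditions hold and propagate correctly to the subformula $\psi$.
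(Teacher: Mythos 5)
Your proof is correct and follows essentially the same route as the paper's: the same induction, with all cases except $Q\bar x\psi$ deferred to Proposition \ref{lem1}, and the quantifier case resolved by identifying the unique candidate witness $Y=\scott{\psi}^{\M}_{\dom(X)\cup\set{\bar x}}$ and appealing to the identity $Q\bar x\scott{\psi}^{\M}_{\dom(X)\cup\set{\bar x}}=\scott{Q\bar x\psi}^{\M}_{\dom(X)}$. The only difference is that you state and verify that identity (and the domain bookkeeping) explicitly, whereas the paper asserts it tacitly in its final displayed equation.
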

\begin{proof} 
Induction as in the case with FO-formulas in Lemma \ref
 {lem1}. The only new case is when $\varphi$ is $Q x \psi$ and $x \notin \BV
 (\psi)$. By the induction hypothesis $Y \models \psi$ iff $Y=\scott{\psi}^
 {\M}_{\dom(Y)}$. Since $x \notin \dom(X)$ we have that $X \models Q
 x \psi$ iff there is $Y$ such that $Q x Y = X$ and $Y=\scott{\psi}^
 {\M}_{\dom(X) \cup \{x\}}$. This is true iff 
\begin{equation*}
X= Q x \scott{\psi}^{\M}_{\dom(X)\cup\{x\}} =
\scott{Q x\psi}^{\M}_{\dom(X)}.\qedhere
\end{equation*}
\end{proof}

The next lemma shows that our definition is true to the truth conditions of
monotone increasing generalized quantifiers in Dependence logic introduced
in \cite{engstrom2012generalized}. A generalized quantifier $Q$ of type
$\langle n \rangle$ is \emph{monotone increasing} if $R \in Q_M$ and
$R \subseteq S$ implies $S \in Q_M$. We remind the reader of the truth condition when $Q$ is
monotone increasing:  $X \models_\dep Q\bar x \varphi$ iff there exists a
function $F: \exists \bar x X \to Q_M$ such that $(\exists \bar x X)
[F/\bar x] \models_\dep \varphi$.

\begin{lemma} 
If $Q$ is monotone increasing of type $\langle n\rangle$, then
$X \models_\dep Q\bar x \varphi$ iff there exists 
$Y$ with $\bar x \in \dom(Y)$, $Q\bar x Y = \exists \bar x X$ and $Y \models_\dep \varphi$.
\end{lemma}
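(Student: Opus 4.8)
The plan is to prove both directions by transporting between the witness $F$ of the left-hand (dependence-logic) condition and the witness team $Y$ of the right-hand condition; the only imported fact is the downward closure of $\models_\dep$, i.e.\ Proposition~\ref{ordsat}(1). Throughout I would write $\bar y = \dom(X)\setminus\set{\bar x}$, so that $\dom(\exists\bar x X)=\bar y$, and use that for any team $Z$ of domain $\bar y\cup\set{\bar x}$ and any $s:\bar y\to M$ the slice satisfies $Z_s(\bar x)=\set{\bar a\in M^n | s[\bar a/\bar x]\in Z}$.

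For left-to-right I would take the given $F:\exists\bar x X\to Q_M$ with $(\exists\bar x X)[F/\bar x]\models_\dep\varphi$ and set $Y=(\exists\bar x X)[F/\bar x]$. Then $\bar x\in\dom(Y)$ and $Y\models_\dep\varphi$ hold by construction, so everything reduces to checking $Q\bar x Y=\exists\bar x X$. Unwinding $X[F/x]$, the assignments of $Y$ restricting to a fixed $s$ are exactly the $s[\bar a/\bar x]$ with $\bar a\in F(s)$ when $s\in\exists\bar x X$, and there are none for $s\notin\exists\bar x X$; hence $Y_s(\bar x)=F(s)$ for $s\in\exists\bar x X$ and $Y_s(\bar x)=\emptyset$ for $s\notin\exists\bar x X$. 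Since each $F(s)\in Q_M$ this already gives $\exists\bar x X\subseteq Q\bar x Y$, and the reverse inclusion is where the one genuine subtlety lives (last paragraph).

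For right-to-left I would go the other way: given $Y$ with $\bar x\in\dom(Y)$, $Y\models_\dep\varphi$ and $Q\bar x Y=\exists\bar x X$, define $F:\exists\bar x X\to\power(M^n)$ by $F(s)=Y_s(\bar x)$. The identity $\exists\bar x X=Q\bar x Y$ says precisely that $Y_s(\bar x)\in Q_M$ for every $s\in\exists\bar x X$, so $F$ really lands in $Q_M$. Every assignment of $(\exists\bar x X)[F/\bar x]$ is an $s[\bar a/\bar x]$ with $\bar a\in Y_s(\bar x)$, hence an element of $Y$, so $(\exists\bar x X)[F/\bar x]\subseteq Y$; downward closure then upgrades $Y\models_\dep\varphi$ to $(\exists\bar x X)[F/\bar x]\models_\dep\varphi$, which is exactly the left-hand condition.

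The step I expect to need care is the reverse inclusion $Q\bar x Y\subseteq\exists\bar x X$ in the first direction. For $s\notin\exists\bar x X$ the slice $Y_s(\bar x)$ is empty, so such an $s$ enters $Q\bar x Y$ exactly when $\emptyset\in Q_M$; to force the desired equality I need $\emptyset\notin Q_M$. This is what monotone increasingness buys outside the trivial case: if $\emptyset\in Q_M$ then monotonicity forces $Q_M=\power(M^n)$, for which the left side holds vacuously (taking $F$ constantly $\emptyset$) while the right side demands that $\exists\bar x X$ be the full team, so one restricts to non-trivial $Q$, where $\emptyset\notin Q_M$ and the argument closes. The hypothesis is in any case indispensable because the function-based left-hand condition is only the correct $\models_\dep$-reading of $Q\bar x\varphi$ when $Q$ is monotone increasing.
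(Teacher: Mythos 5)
Your proof is correct and follows essentially the same route as the paper's: in both directions one transports between the witness function $F$ and the witness team, taking $Y=(\exists\bar x X)[F/\bar x]$ for left-to-right and $F(s)=Y_s(\bar x)$ for right-to-left, with downward closure of $\models_\dep$ doing the work in the latter case. Your closing observation about the case $\emptyset\in Q_M$ is a genuine refinement rather than a digression: the paper's proof simply asserts $Q\bar x Y=X$ without comment, and that equality (indeed the biconditional of the lemma itself) fails for the trivially true quantifier with $Q_M=\power(M^n)$ whenever $\exists\bar x X$ is not the full team, so the restriction to $\emptyset\notin Q_M$ that you make explicit is in fact needed for the statement to hold.
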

\begin{proof} First observe that we may assume $\bar x \notin \dom(X)$ to
 simply notation. Then assume $X \models_\dep Q\bar x \varphi$ and let $F:
 X \to Q_M$ be such that $X[F/\bar x] \models_\dep \varphi$. We may now chose
 $Y=X[F/\bar x]$ satisfying $Y \models \varphi$ and $Q\bar x Y = X$. 

For the other direction, assume that there is a $Y \models_\dep \varphi$ such
that  $Q \bar x Y = X$. Define $F: X \to \power(M^k)$ by $F(s) = \set{\bar a
| s[\bar a /\bar x] \in Y}$. Clearly $X[F/\bar x] \subseteq Y$ proving that
$X[F/\bar x] \models_\dep \varphi$. Also, if $s \in X = Q\bar x Y$, then $F
(s) \in Q_M$.
\end{proof}
%$F: \exists \bar x X \to Q_M$ such that $(\exists \bar x X) [F/\bar x] \models_\mt \varphi$.
%$\bar x$ some variables and $X$ a team, then the following are equivalent:
% \begin{enumerate}
% 	\item There is $Y$ such that $\bar x \in \dom(Y)$, $\M,Y\mmodels \varphi$ and  
% 	$\exists \bar x X = Q\bar x Y$.
% 	\item There is a function $F: X \to Q_M$ such that $(\exists \bar x X)
%  [F/\bar x] =$.
% \end{enumerate}
% \end{lemma}
%$X \models_\dep Q\bar x \varphi$ iff there exists a function 
%$F: \exists \bar x X \to Q_M$ such that $(\exists \bar x X) [F/\bar x] \models_\mt \varphi$.
%$Y$ with $\bar
%x \in \dom(Y)$, $Q\bar x Y = \exists \bar x X$ and $Y \models \varphi$.

In \cite{engstrom2013characterizing} the strength of $D(Q)$ is characterized
for monotone increasing quantifiers $Q$ as the strength of $\text{ESO}
(Q)$. Since we may interpret $D(Q)$ in mt-logic extended with $Q$ we see that
this logic is at least as strong as $\text{ESO}(Q)$. By a standard argument
we can ``interpret'' mt-logic with $Q$ in $\text{ESO}(Q)$ proving that
mt-logic with $Q$ has the same strength as $\ESO(Q)$ for monotone
increasing quantifiers $Q$:

\begin{thm} 
Mt-logic extended with a monotone increasing $Q$ has the
same strength as $D(Q)$, and thus as $\ESO(Q)$.
\end{thm}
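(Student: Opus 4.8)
The plan is to sandwich mt-logic extended with $Q$ between $\ESO(Q)$ on both sides. Concretely, I would prove the two bounds
$$\ESO(Q) \leq \mt(Q) \leq \ESO(Q)$$
and combine them with the characterization of \cite{engstrom2013characterizing}, which gives $D(Q) \equiv \ESO(Q)$ for monotone increasing $Q$. This yields the chain $\ESO(Q)\equiv D(Q) \leq \mt(Q) \leq \ESO(Q)$, so all three logics have the same strength. Both bounds are witnessed by explicit satisfaction-preserving translations and the cited equivalence holds for formulas as well as sentences, so the conclusion holds at both levels.

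For the lower bound I would extend the translation $f(\bar w,\cdot)$ of dependence logic into mt-logic by the single clause
$$f(\bar w, Q\bar x\,\varphi) = Q\bar x\, f(\bar w,\bar x,\varphi).$$
The immediately preceding lemma establishes that, for monotone increasing $Q$, the condition $X \models_\dep Q\bar x\,\varphi$ is equivalent to the existence of a witness team $Y$ with $\bar x \in \dom(Y)$, $Q\bar x Y = \exists \bar x X$ and $Y \models_\dep \varphi$ --- which is verbatim the mt-logic clause of Definition \ref{gq}. Hence the inductive step for $Q$ is parallel to the existing quantifier steps, and the proof that $X \models_\dep \varphi$ iff $X \mmodels \varphi^+$ extends with this one extra case. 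Composing with $D(Q)\equiv\ESO(Q)$ gives $\ESO(Q)\leq\mt(Q)$.

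For the upper bound I would extend the earlier proposition expressing mt-satisfaction in $\Sigma_1^1$ by an inductive clause for $Q\bar x\,\varphi$. Treating $Q$ as a relation symbol of $\ESO(Q)$, the assertion $\M,X\mmodels Q\bar x\,\varphi$ is rendered by existentially quantifying a relation that encodes a witness team $Y$, stating by induction the $\ESO(Q)$-formula for $\M,Y\mmodels\varphi$, and stating the identity $\exists\bar x X = Q\bar x Y$. The last conjunct is first-order over the relation encoding $Y$: it says that an assignment $s$ of the remaining variables lies in $\exists\bar x X$ exactly when the section $Y_s(\bar x)$ belongs to $Q_M$, which is expressed using the symbol $Q$. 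This is the standard translation of team satisfaction into $\Sigma_1^1$, now relative to $Q$, and gives $\mt(Q)\leq\ESO(Q)$.

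Combining the two bounds with the cited equivalence closes the sandwich. The genuinely new content lies in the upper-bound step, since the lower bound reduces to the preceding lemma and $D(Q)\equiv\ESO(Q)$ is quoted; the point to get right there is the clause $\exists\bar x X = Q\bar x Y$, i.e.\ faithfully coding the section condition $Y_s(\bar x)\in Q_M$ over the relation for $Y$ using the relation symbol $Q$. No monotonicity is needed for this bound, since it holds for arbitrary $Q$; monotonicity enters the argument only through the preceding lemma and the cited characterization of $D(Q)$.
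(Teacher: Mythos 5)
Your proposal matches the paper's argument: the paper likewise sandwiches mt-logic with $Q$ between $D(Q)\equiv\ESO(Q)$ (quoting \cite{engstrom2013characterizing}) and $\ESO(Q)$, obtaining the lower bound by extending the interpretation of dependence logic into mt-logic via the preceding lemma on monotone increasing quantifiers, and the upper bound by the standard translation of team satisfaction into $\Sigma_1^1$ relative to $Q$. Your write-up is, if anything, more explicit than the paper's sketch about the new clause for $f$ and about where monotonicity is actually used.
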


As we say in Proposition \ref{prop:cons} we also have a conservativity result,
at least for a large class of $\FO(Q)$-formulas. Before discussing branching
quantifiers we also note that our definition of a generalized quantifier
respects iterated quantifiers sense below. The iteration of two type $\langle
1 \rangle$ quantifiers $Q$ and $Q'$ is defined by $$ (Q \cdot Q')_M = \set
{R \subseteq M^2 | \set{ a | R_a \in Q'_M} \in Q_M },$$ where $R_a = \set
{b | \langle a,b\rangle \in R}$.

\begin{thm}
For any two generalized quantifiers $Q$ and $Q'$ of type  $\langle 1 \rangle$:
$$ \M,X \mmodels (Q \cdot Q') xy \, \phi \, \text{ iff }\, 
\M,X \mmodels Q x \, Q' x \, \phi. $$
\end{thm}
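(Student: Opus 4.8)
The entire equivalence collapses to a single Fubini-style identity on teams, so the plan is to extract that identity first and then read off both directions from Definition~\ref{gq}. To lighten notation I would assume $x,y\notin\dom(X)$ (harmless here, exactly as in the earlier quantifier arguments, since each clause depends on $X$ only after the bound variables are projected out), so that $\exists x X=\exists y X=\exists xy\,X=X$. Unfolding Definition~\ref{gq}, the left-hand side then says: there is $Y\mmodels\phi$ with $x,y\in\dom(Y)$ and $X=(Q\cdot Q')\,xy\,Y$; and the right-hand side says: there are $Y'\mmodels Q'y\,\phi$ and $Y\mmodels\phi$ with $X=Qx\,Y'$ and $Y'=Q'y\,Y$. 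It therefore suffices to prove that for every team $Y$ with $x,y\in\dom(Y)$,
$$Qx\,(Q'y\,Y)=(Q\cdot Q')\,xy\,Y.$$

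To establish this identity I would fix an arbitrary $s$ with $\dom(s)=\dom(Y)\setminus\set{x,y}$ and compare membership on the two sides. Writing $Z=Q'y\,Y$, we have $s\in Qx\,Z$ iff $\set{a|s[a/x]\in Z}\in Q_M$, and $s[a/x]\in Z$ iff $\set{b|s[a/x][b/y]\in Y}\in Q'_M$, so
$$s\in Qx\,(Q'y\,Y)\iff\set{a|\set{b|s[a/x][b/y]\in Y}\in Q'_M}\in Q_M.$$
On the other side, $s\in(Q\cdot Q')\,xy\,Y$ iff $R_s\in(Q\cdot Q')_M$, where $R_s=Y_s(x,y)=\set{(a,b)|s[a/x][b/y]\in Y}$; by the definition of the iteration this means $\set{a|(R_s)_a\in Q'_M}\in Q_M$, and since $(R_s)_a=\set{b|s[a/x][b/y]\in Y}$ this is the same condition verbatim. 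Hence the two teams coincide.

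With the identity in hand, both directions are immediate. Given a witness $Y$ for the left-hand side, set $Y'=Q'y\,Y$; then $\exists y Y'=Y'=Q'y\,Y$ shows $\M,Y'\mmodels Q'y\,\phi$, while the identity gives $Qx\,Y'=(Q\cdot Q')\,xy\,Y=X$, so $Y'$ witnesses $\M,X\mmodels Qx\,Q'y\,\phi$. Conversely, witnesses $Y',Y$ for the right-hand side satisfy $Y'=Q'y\,Y$ and $X=Qx\,Y'$, so the identity yields $(Q\cdot Q')\,xy\,Y=Qx\,Y'=X$ and the same $Y$ witnesses the left-hand side. The only genuine work is the identity of the middle paragraph, and the hard part there is purely the bookkeeping of the slicing operations $Y\mapsto Y_s$ and $R\mapsto R_a$: one must check that slicing $Y$ first by $y$ and then by $x$ reproduces exactly the nested membership condition that $(Q\cdot Q')_M$ imposes on the section $Y_s(x,y)$.
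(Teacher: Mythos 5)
Your proof is correct and follows exactly the paper's route: the paper also reduces the theorem to the single identity $Qx\,(Q'y\,Y)=(Q\cdot Q')\,xy\,Y$ for teams $Y$ with $x,y\in\dom(Y)$, stating that it ``follows immediate from the definitions.'' You have simply carried out the definitional bookkeeping (slicing by $Y\mapsto Y_s$ and $R\mapsto R_a$) that the paper leaves implicit, and that bookkeeping checks out.
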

\begin{proof} It is clearly enough to see that $Qx (Q'y Y) = (Q\cdot Q')xy Y$
 for all teams $Y$ including $x$ and $y$ in its domain. But this follows
 immediate from the definitions. 
\end{proof}

\section{Branching quantifiers}

We conclude this paper by discussing the branching of generalized quantifiers.
One of the motivations behind Hintikka and Sandu’s introduction of
IF-logic \cite{Hintikka:1989} was to express the branching behavior of
quantifiers. In \cite{engstrom2012generalized}, it was demonstrated that
there exists a natural way to express the branching of two monotone
increasing quantifiers within what is now known as Independence logic. In
this section, we establish that mt-logic is sufficiently powerful to capture
the branching behavior of a broader class of quantifiers, specifically the
continuous ones

Barwise (see \cite{barwise1979branching}), among others, argues that for
increasingly monotone quantifiers $Q_1$ and $Q_2$ of type $\langle 1 \rangle$ the
branching of $Q_1$ and $Q_2$ $${Q_1x}{Q_2y}A(x,y)$$ should be interpreted as
$$\Br(Q_1,Q_2)xy \ A(x,y),$$ where $\Br(Q_1,Q_2)$ is the type $\langle
2 \rangle$ quantifier $$\set{(M,R) | \exists A \in Q_1, B \in Q_2, A \times
B \subseteq R}.$$

Westerståhl, in \cite{westerstaahl1987branching}, suggests a definition of the
branching of a larger class of generalized quantifiers; 
\emph{continuous} generalized quantifiers. These quantifiers have also been
 studied under different names: convex quantifiers \cite
 {gierasimczuk2023convexity} and connected quantifiers \cite
 {chemla2019connecting}. For simplicity we only give the definitions here for
 quantifiers of type $\langle n \rangle$:

\begin{defin}
	\begin{itemize}
		\item A quantifier $Q$ is \emph{continuous} if for every $M$ and every $R_1 \subseteq R_2 \subseteq
		R_3$ such that $R_1,R_3 \in Q_M$ we have $R_2 \in Q_M$.
		\item The branching $\text{Br}(Q_1,Q_2)$ of two continuous quantifiers $Q_1$ and $Q_2$ is
			defined by  $R \in \text{Br}(Q_1,Q_2)_M$ iff there exists $S_1,S_1' \in {Q_1}_M$ and
			$S_2,S_2' \in {Q_2}_M$ such
			that $S_1 \times S_2 \subseteq R \subseteq S_1'\times S_2'$.
	\end{itemize}
\end{defin}

In mt-logic we are able to express the branching of two continuous quantifiers as the following propositions shows.

\begin{prop}
Let $Q_1$ and $Q_2$ be continuous quantifiers of type  $\langle 1 \rangle$ and $\varphi$ a first-order formula, then
$\M \mmodels \text{Br}(Q_1,Q_2) xy \, \phi \,$ iff  
\begin{multline}\label{branchingformula}
\M \mmodels  Q_1x \, Q_2y \, Q_1z \, Q_2w (x \indep y \land xy \indep z \land xyz\indep w \land\\  (\varphi
(x,y) \iland \top_{xyzw})  \land (\varphi(z,w) \ilor \top_{xyzw})). 
\end{multline}
\end{prop}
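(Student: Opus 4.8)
The plan is to prove both directions by reducing each side to the single combinatorial statement that $R:=\rel(\scott{\varphi}^{\M}_{xy})$ lies in $\Br(Q_1,Q_2)_M$, i.e.\ that there are $S_1,S_1'\in{Q_1}_M$ and $S_2,S_2'\in{Q_2}_M$ with $S_1\times S_2\subseteq R\subseteq S_1'\times S_2'$. For the left-hand side this is almost immediate: unfolding Definition \ref{gq} at $\set{\epsilon}$ yields a witness team $Y$ with $\dom(Y)=\set{x,y}$, $\M,Y\mmodels\varphi$ and $\Br(Q_1,Q_2)xy\,Y=\set{\epsilon}$; since $\varphi$ is first-order (and we may assume it untangled with bound variables disjoint from $x,y$), Proposition \ref{prop:cons} gives $Y=\scott{\varphi}^{\M}_{xy}$, so $\rel(Y)=R$ and the quantifier clause reads exactly as $R\in\Br(Q_1,Q_2)_M$.

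The substance is the right-hand side. First I would peel off the four quantifiers at $\set{\epsilon}$: Definition \ref{gq} together with the domain bookkeeping produces a team $Y$ with $\dom(Y)=\set{x,y,z,w}$ satisfying the matrix, linked to $\set{\epsilon}$ by the chain $Q_2w\,Y=Y_3$, $Q_1z\,Y_3=Y_2$, $Q_2y\,Y_2=Y_1$, $Q_1x\,Y_1=\set{\epsilon}$. Writing $A_1=Y(x)$, $A_2=Y(y)$, $B_1=Y(z)$, $B_2=Y(w)$, the three independence conjuncts, interpreted through Proposition \ref{prop_indep}, force $Y$ to be the product grid $A_1\times A_2\times B_1\times B_2$. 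On such a grid I expect the conjunct $\varphi(x,y)\iland\top_{xyzw}$ to collapse, by the containment equivalence behind \eqref{eq_literal}, to the inner inclusion $A_1\times A_2\subseteq R$, and the conjunct $\varphi(z,w)\ilor\top_{xyzw}$ to collapse (its $\ilor$-dual) to the outer inclusion $R\subseteq B_1\times B_2$; the $Q$-memberships $A_1,B_1\in{Q_1}_M$ and $A_2,B_2\in{Q_2}_M$ are then read off by tracing the four arity-reductions across the grid, each of which tests a uniform fibre of $Y$. Setting $S_1=A_1,\,S_2=A_2,\,S_1'=B_1,\,S_2'=B_2$ gives the branching data, and conversely, given such boxes the grid $S_1\times S_2\times S_1'\times S_2'$ is a witness for the whole formula.

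The hard part will be making the fibre analysis in the quantifier chain precise and pinning down exactly where continuity enters. Because the clauses for $Q$ carry the maximality ``iff'' (a base assignment is selected precisely when its fibre lies in $Q$), the fibres that $Y$ actually realizes are squeezed between the inner box $S_1\times S_2$ and the outer box $S_1'\times S_2'$; it is here that the convexity of $Q_1$ and $Q_2$ is indispensable, since $S_i\subseteq\text{fibre}\subseteq S_i'$ with $S_i,S_i'\in Q$ is exactly the hypothesis that forces the fibre into $Q$. I would also have to reconcile the outer-box clause $\varphi(z,w)\ilor\top_{xyzw}$ with the product structure and dispatch the degenerate cases (empty fibres off the support of the grid, and whether $\emptyset\in{Q_i}_M$), all of which I expect to be controlled by the same convexity.
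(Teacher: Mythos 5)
Your proposal takes essentially the same route as the paper's proof: both directions are reduced to the single condition that $R=\rel(\scott{\varphi}_{x,y})$ admits boxes $A\times B\subseteq R\subseteq C\times D$ with $A,C\in Q_{1M}$ and $B,D\in Q_{2M}$, obtained by characterizing the witness of the quantifier prefix together with the independence conjuncts as a product grid and reading $(\varphi(x,y)\iland\top_{xyzw})$ and $(\varphi(z,w)\ilor\top_{xyzw})$ as the inner and outer inclusions. The one point where you diverge is that the paper never invokes continuity inside the argument at all --- it is absorbed entirely into the definition of $\Br(Q_1,Q_2)$, whose membership condition is literally the displayed box condition --- so the fibre-squeezing role you anticipate for convexity does not appear, and the paper is otherwise exactly as terse as your sketch on the grid and fibre analysis you flag as the hard part.
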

\begin{proof} 

First note that $X \mmodels Q_1x \, Q_2y \, Q_1z\, Q_2w\, (x \indep y \land
xy \indep z \land xyz\indep w )$ iff $X = \{\epsilon\}[A/x][B/y][C/z]
[D/w]$ where $A,C \in Q_{1M}$ and $B,D \in Q_{2M}$. 

Secondly, such an $X$ satisfies $\varphi (x,y) \iland \top_{xyzw}$ iff
$A \times B \subseteq R$ where $R$ is the relation corresponding to the team
$\scott{\varphi}_{x,y}$. Similarly $X$ satisfies $\varphi (z,w) \iland \top_
{xyzw}$ iff $R \subseteq C \times D$. 

Thus \eqref{branchingformula} holds iff there are $A,C \in Q_{1M}$ and
$B,D \in Q_{2M}$ such that $A \times C \subseteq R \subseteq B,D$, i.e., iff
$R \in \text{Br}(Q_1,Q_2)$. Which is equivalant to $\M \mmodels \text{Br}
(Q_1,Q_2) xy \, \phi$.
\end{proof}

Thus, mt-logic is expressible enough to branch these quantifiers, and so
should be able to formalize a larger fragment of natural languages than
Dependence logic. However, this result is not completely satisfactory as the
formula that expresses branching is unnatural and we have not been able to
establish a result in which the branching $\text{Br}(Q_1,Q_2)x,y \phi
(x,y)$ can be expressed by a formula of the form $$ Q_1 x Q_2 y ( \psi
(x,y) \land \phi(x,y)).$$

\section{Conclusion}

The results in this paper demonstrates that it is possible to handle
generalized quantifiers within team semantics, given that the flatness
principle is relaxed. This opens up for a more general, and more algebraic,
definition of team semantics that quantifies over \emph{all} possible
``lifts'', in the sense that we don't restrict the possible semantic values
of atomic formulas. In the context of Dependence logic the semantic values of
atomic formulas are confined to principal ideals of teams. In the team
semantics used in this paper the restriction is to singleton sets. In a
related paper \cite{engstrom2023propositional} we investigate the case of
propositional logic when no such restriction is imposed.  The resulting
logic, known as the logic of teams, proves powerful enough to express
propositional dependence logic and all its related variants. Looking ahead,
we aim to merge the approach presented in this paper with that of the logic
of teams.

\section*{Acknowledgments}

This paper was written as part of the project: \emph{Foundations for team
semantics: Meaning in an enriched framework}, a research project supported by
grant 2022-01685 of the Swedish Research Council, Vetenskapsrådet.

%\bibliography{refs}
\printbibliography

\end{document}